\documentclass[a4paper]{article}

\usepackage{amsmath,amsfonts,amssymb,amsthm}
\usepackage{graphicx}
\usepackage{url}
\usepackage[latin1]{inputenc}

\title{{\large \textbf{Amenable actions of amalgamated free products of free groups over a cyclic subgroup and generic property}}}

\author{Soyoung Moon\footnote{This work is supported by Swiss NSF grant $20\_118014/1$.}}
\date{\today}

\newcommand{\Fix}{\textrm{Fix}}
\newcommand{\F}{\mathbb{F}} \newcommand{\Z}{\mathbb{Z}} \newcommand{\N}{\mathbb{N}}

\theoremstyle{plain}
\newtheorem{thm}{Theorem}
\newtheorem{prop}[thm]{Proposition}

\newtheorem{lem}[thm]{Lemma}

\theoremstyle{definition}
\newtheorem{defn}{Definition}[section]
\newtheorem{rem}{Remark}[section]

\begin{document}

\maketitle

\smallskip

\begin{abstract}
We show that the class of amalgamated free products of two free groups over a cyclic subgroup admits amenable, faithful and transitive actions on infinite countable sets. This work generalizes the results on such actions for doubles of free group on 2 generators.
\end{abstract}

\bigskip

\pagestyle{headings}

\section{Introduction}

An action of a countable group $G$ on a set $X$ is \textit{amenable} if there exists a sequence $\{A_n\}_{n\geq 1}$ of finite non-empty subsets of $X$ such that for every $g\in G$, one has
$$ \lim_{n\rightarrow \infty}\frac{|A_n \vartriangle g\cdot A_n|}{|A_n|}=0. $$
Such a sequence is called a \textit{F\o lner sequence} for the action of $G$ on $X$. Thanks to a result of F\o lner \cite{Fol}, this definition is equivalent to the existence of a $G$-invariant mean on subsets of $X$.

\begin{defn}
We say that a countable group $G$ is in the class $\mathcal{A}$ if it admits an amenable, faithful and transitive action on an infinite countable set.
\end{defn}
The question of understanding which groups are contained in $\mathcal{A}$ was raised by von Neumann and recently studied in a few papers (\cite{Van}, \cite{GlMo}, \cite{GrNe}, \cite{Moon}). In this note we add the following:

\begin{thm}\label{CycPinch}
Let $n$, $m\geq 1$. Let $G=\F_{m+1}\ast_{\Z}\F_{n+1}$ be an amalgamated free product of two free groups over a cyclic subgroup such that the image of the generator of $\Z$ is cyclically reduced in both free groups. Then any finite index subgroup of $G$ is in $\mathcal{A}$.
\end{thm}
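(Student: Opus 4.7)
Write $G = A *_C B$ with $A = \F_{m+1}$, $B = \F_{n+1}$ and $C = \langle c\rangle$. The plan is to produce an explicit amenable, faithful and transitive action of $G$ on a countable set, in a way that adapts to every finite-index subgroup. The construction follows the model developed in \cite{Moon} for doubles: since each free factor is already in $\mathcal{A}$ by \cite{Van}, I would choose amenable, faithful, transitive actions $A \curvearrowright X_A$ and $B \curvearrowright X_B$ in which $c$ acts on each side as a single bi-infinite cycle, and form $X$ as the pushout $X_A \sqcup_C X_B$ obtained by identifying one $\langle c\rangle$-orbit on each side. The universal property of the amalgamated product endows $X$ with a $G$-action, and transitivity is immediate since each factor acts transitively on its piece.

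The main obstacle is amenability. I would build Følner sets $A_n \subset X$ by concatenating large Følner sets of the two factor actions along long arcs of the shared $\langle c\rangle$-orbit. For $g \in G$ in Bass--Serre normal form $g = g_1 g_2 \cdots g_k$, alternating between $A\setminus C$ and $B\setminus C$, the estimate $|A_n \vartriangle gA_n|/|A_n|\to 0$ would follow from a telescoping bound by the individual Følner errors of each letter $g_i$ acting on its own factor. The cyclically reduced hypothesis enters precisely here: it guarantees that the conjugates of the shared $\langle c\rangle$-orbit by elements of $A\setminus C$ or $B\setminus C$ meet that orbit in a controlled finite set, so that successive letters of $g$ do not refold the Følner approximation at the glue. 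Faithfulness then follows from a Bass--Serre ping-pong argument: a reduced alternating word must move some chosen point across the glued orbit at each stage, and so cannot act trivially on $X$; if the gluing introduces residual kernel, a direct product with a faithful amenable action from \cite{GlMo} or \cite{GrNe} restores faithfulness without disturbing amenability.

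For the finite-index statement, I would use Bass--Serre theory to present any finite-index subgroup $H \le G$ as the fundamental group of a finite graph of groups with finitely generated free vertex groups (intersections of $H$ with conjugates of $A$ and $B$) and infinite cyclic edge groups (intersections with conjugates of $C$), in which the edge-group generators remain cyclically reduced in the adjacent vertex groups. The pushout construction then extends inductively along this finite graph of groups, gluing in one cyclic amalgam or HNN extension over a cyclic subgroup at a time; each step is of the same type as in the single-amalgam case, so the technical content reduces to the Følner estimate of the previous paragraph.
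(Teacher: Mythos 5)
The central step of your construction does not actually produce a $G$-action. If $A\curvearrowright X_A$ and $B\curvearrowright X_B$ are actions on two \emph{different} sets and you glue them along a single $\langle c\rangle$-orbit, the resulting set $X_A\sqcup_C X_B$ carries no natural action of $G=A\ast_C B$: an element of $B$ has nowhere to send a point of $X_A$ outside the glued orbit. The universal property of the amalgam only yields an action after inducing up, i.e.\ on a set built from infinitely many copies of $X_A$ and $X_B$ arranged along the Bass--Serre tree; on that induced set transitivity is no longer immediate and, more seriously, amenability genuinely fails in general (the Schreier graph acquires tree-like branching), so no telescoping F\o lner estimate over the normal form $g_1\cdots g_k$ can rescue it --- a letter $g_i\in B\setminus C$ applied to a F\o lner set sitting mostly in the $X_A$-part is simply not controlled by the F\o lner data of $B\curvearrowright X_B$. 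This is exactly the difficulty the paper is built to circumvent: there, \emph{both} free factors act on the \emph{same} countable set $X$; the word $c$ is arranged, by a Baire-category argument, to act with \emph{all orbits finite} and infinitely many orbits of each finite size (the opposite of your ``single bi-infinite cycle''), so that $c$ and $d$ become conjugate in $Sym(X)$; and the second factor is conjugated by a generic intertwiner $\sigma$ with $\sigma c=d\sigma$. Amenability then comes from a common F\o lner sequence $\{A_k\}$ which is fixed pointwise by $c$ and satisfies $\sigma(A_k)=B_k$, and faithfulness is a genericity statement about $\sigma$ rather than a ping-pong argument.

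Two further steps are also unsound. The fallback of taking a direct product with a faithful amenable action ``without disturbing amenability'' fails: amenability of actions is not preserved under diagonal products, and transitivity is destroyed outright. For the finite-index statement, your plan requires redoing the entire construction for an arbitrary finite graph of free groups with cyclic edge groups, including HNN extensions --- a strictly harder theorem than the one being proved. The paper instead builds the needed property into the generic action of a vertex group (every finite-index subgroup of $\F_{n+1}$ already acts transitively on $X$), so that transitivity of a finite-index subgroup $H\leq G$ follows from transitivity of $H\cap\F_{n+1}$, while faithfulness and amenability pass to subgroups automatically. Finally, you omit the normalization the paper needs before any of this can start: one must first replace $c$ by $\phi(c)$ for an automorphism $\phi$ of the free factor so that some generator occurring in $c$ has exponent sum zero; without this one cannot produce F\o lner sets for the free factor that are fixed by $c$.
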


The methods used in this work are analogous to those used in \cite{Moon} to obtain the theorem \ref{CycPinch} in case of $m=n=1$.
The role of the generic permutation $\alpha$ in \cite{Moon} is now played by a $n$-tuple of permutations $(\alpha_1,...,\alpha_n)$ and, for a cyclically reduced word $c=c(\alpha_1,...,\alpha_n)$, we now prove genericity of the set of such $n$-tuples for which the permutation $c$ has infinitely many orbits of size $k\in\N$, and all orbits finite. This new result allows us to apply the method of \cite{Moon} in our new setting.
\bigskip

For $X$ an infinite countable set, recall that $Sym(X)$ with the topology of pointwise convergence is a Baire space, i.e. every intersection of countably many dense open subsets is dense in $Sym(X)$. So for every $n\geq 1$, the product space $(Sym(X))^n$ is a Baire space. A subset of a Baire space is called \textit{meagre} if it is a union of countably many closed subsets with empty interior; and \textit{generic} or \textit{dense $G_{\delta}$} if its complement is meagre.

\begin{rem}
The amalgamated products appearing in Theorem \ref{CycPinch} are known in combinatorial group theory as ``\textit{cyclically pinched one-relator groups}'' (see \cite{Fine}). These are exactly the groups admitting a presentation of the form $G=\langle a_1, \dots, a_n, b_{1}, \dots, b_m | c=d\rangle$ where $1\neq c=c(a_1,\dots,a_n)$ is a cyclically reduced non-primitive word (not part of a basis) in the free group $\F_n=\langle a_1,\dots, a_{n}\rangle$, and $1\neq d=d(b_1,\dots,b_m)$ is a cyclically reduced non-primitive word in the free group $\F_m=\langle b_1,\dots,b_m\rangle$. The most important examples of such groups are the surface groups  i.e. the fundamental group of a compact surface. The fundamental group of the closed orientable surface of genus $g$ has the presentation
$\langle a_1, b_1, \dots, a_g, b_g | [a_1,b_1]\cdots [a_g,b_g]=1\rangle$.
By letting $c=[a_1,b_1]\cdots [a_{g-1},b_{g-1}]$ and $d=[a_g,b_g]^{-1}$, the group decomposes as the free product of the free group $\mathbb{F}_{2(g-1)}$ on $a_1, b_1, \dots, a_{g-1}, b_{g-1}$ and the free group $\mathbb{F}_2$ on $a_g, b_g$ amalgamated over the cyclic subgroup generated by $c$ in $\mathbb{F}_{2(g-1)}$ and $d$ in $\mathbb{F}_2$, hence it is a cyclically pinched one-relator group.
\end{rem}

\noindent \textbf{Acknowledgement.} The results presented here are part of my PhD thesis at Université de Neuchâtel, Switzerland. I would like to thank Alain Valette for his precious advice and constant encouragement and Yves Stalder for pointing out some mistakes in the previous version and numerous remarks.

\section{Graph extensions}

A graph $G$ consists of the set of vertices $V(G)$ and the set of edges $E(G)$, and two applications $E(G) \rightarrow E(G)$; $e \mapsto \bar{e}$ such that $\bar{\bar{e}}=e$ and $\bar{e}\neq e$, and $E(G)\rightarrow V(G)\times V(G)$; $e \mapsto (i(e), t(e))$ such that $i(e)=t(\bar{e})$. An element $e\in E(G)$ is a \textit{directed edge} of $G$ and $\bar{e}$ is the \textit{inverse edge} of $e$. For all $e\in E(G)$, $i(e)$ is the \textit{initial vertex} of $e$ and $t(e)$ is the \textit{terminal vertex} of $e$.

Let $S$ be a set. A \textit{labeling} of a graph $G=(V(G), E(G))$ on the set $S^{\pm 1}=S\cup S^{-1}$ is an application
$$
l:E(G) \rightarrow S^{\pm 1}; e \mapsto l(e)
$$
such that $l(\bar{e})=l(e)^{-1}$. A \textit{labeled graph} $G=(V(G), E(G),S,l)$ is a graph with a labeling $l$ on the set $S^{\pm 1}$. A labeled graph is \textit{well-labeled} if for any edges $e$, $e' \in E(G)$, $\big[ i(e)=i(e')$ and $l(e)=l(e')\big]$ implies that $e=e'$.

\bigskip

A word $w=w_m\cdots w_1$ on $\{ \alpha^{\pm 1}_n$, $\alpha^{\pm 1}_{n-1}$, $\dots$, $\alpha^{\pm 1}_1,\beta^{\pm 1} \}$ is called \textit{reduced} if $w_{k+1}\neq w_k^{-1}$, $\forall 1\leq k\leq m-1$. A word $w=w_m\cdots w_1$ on $\{ \alpha^{\pm 1}_n$, $\alpha^{\pm 1}_{n-1}$, $\dots$, $\alpha^{\pm 1}_1$, $\beta^{\pm 1} \}$ is called \textit{weakly cyclically reduced} if $w$ is reduced and $w_m\neq w_1^{-1}$; this definition allows $w_m$ and $w_1$ to be equal. Given a reduced word, we define two finite graphs labeled on $\{ \alpha^{\pm 1}_n$, $\alpha^{\pm 1}_{n-1}$, $\dots$, $\alpha^{\pm 1}_1,\beta^{\pm 1} \}$ as follows:

\begin{defn}\label{def-path}
Let $w=w_m\cdots w_1$ be a reduced word on $\{\alpha^{\pm 1}_k$, $\alpha^{\pm 1}_{k-1}$, $\dots$, $\alpha^{\pm 1}_1$, $\beta^{\pm 1}\}$. The \emph{path} of $w$ is a finite labeled graph $P(w, v_0)$ labeled on $\{\alpha^{\pm 1}_k$, $\dots$, $\alpha^{\pm 1}_1$, $\beta\}$ consisting of $m+1$ vertices and $m$ directed edges $\{e_1$, $\dots$, $e_m\}$ such that
\begin{enumerate}
\item[$\cdot$] $i(e_{j+1})=t(e_j)$, $\forall 1 \leq j \leq m-1$;
\item[$\cdot$] $v_0=i(e_1)\neq t(e_m)$;
\item[$\cdot$] $l(e_j)=w_j$, $\forall 1\leq j \leq m$.
\end{enumerate}
\end{defn}
The point $v_0$ is the \textit{startpoint} and the point $t(e_m)$ is the \textit{endpoint} of the path $P(w,v_0)$. The two points are the \textit{extreme points} of the path.

\begin{defn}\label{def-cycle}
Let $w=w_m\cdots w_1$ be a reduced word on $\{\alpha^{\pm 1}_k$, $\alpha^{\pm 1}_{k-1}$, $\dots$, $\alpha^{\pm 1}_1$, $\beta^{\pm 1}\}$. The \emph{cycle} of $w$ is a finite labeled graph $C(w, v_0)$ labeled on $\{\alpha^{\pm 1}_k$, $\dots$, $\alpha^{\pm 1}_1$, $\beta\}$ consisting of $m$ vertices and $m$ directed edges $\{e_1$, $\dots$, $e_m\}$ such that
\begin{enumerate}
\item[$\cdot$] $i(e_{j+1})=t(e_j)$, $\forall 1 \leq j \leq m-1$;
\item[$\cdot$] $v_0=i(e_1)= t(e_m)$;
\item[$\cdot$] $l(e_j)=w_j$, $\forall 1\leq j \leq m$.
\end{enumerate}
\end{defn}

The point $v_0$ is the \textit{startpoint} of the cycle $C(w,v_0)$.

Notice that since $w$ is a reduced word, the graph $P(w, v_0)$ is well-labeled. If $w$ is weakly cyclically reduced, then $C(w, v_0)$ is also well-labeled.

Conversely, if $P=\{e_1$, $e_2$, $\dots$, $e_n\}$ is a well-labeled path with $i(e_1)=v_0$, labeled by $l(e_i)=g_i$, $\forall i$, then there exists a unique reduced word $w=g_n\cdots g_1$ such that $P(w,v_0) $ is $P$. If $C=\{e_1$, $e_2$, $\dots$, $e_n\}$ is a well-labeled cycle with $t(e_n)=i(e_1)=v_0$, labeled by $l(e_i)=g_i$, $\forall i$, then there exists a unique weakly cyclically reduced word $w_1=g_n\cdots g_1$ such that $C(w,v_0)$ is $C$.

\bigskip

Let $X$ be an infinite countable set. Let $\beta$ be a simply transitive permutation of $X$. The \textit{pre-graph} $G_0$ is a labeled graph consisting of the set of vertices $V(G_0)=X$ and the set of directed edges all labeled by $\beta$ such that every vertex has exactly one entering edge and one outgoing edge, and $t(e)=\beta(i(e))$. One can imagine $G_0$ as the Cayley graph of $\mathbb{Z}$ with $1$ as a generator.

\begin{defn}
An \textit{extension} of $G_0$ is a well-labeled graph $G$ labeled by $\{\alpha^{\pm 1}_k$, $\alpha^{\pm 1}_{k-1}$, $\dots$, $\alpha^{\pm 1}_1$, $\beta^{\pm 1}\}$, containing $G_0$, with $V(G)=V(G_0)=X$. We will denote it by $G_0 \subset G$.
\end{defn}

In order to have a transitive action with some additional properties of the $\langle \alpha_k,\dots, \alpha_1, \beta \rangle$-action on $X$, we shall extend inductively $G_0$ on $1\leq i\leq k $ by adding finitely many directed edges labeled by $\alpha_i$ on $G_0$ where the edges labeled by $\beta$ are already prescribed. In order that the added edges represent an action on $X$, we put the edges in such a way that the extended graph is well-labeled, and moreover we put an additional edge labeled by $\alpha_i$ on every endpoint of the extended edges by $\alpha_i$; more precisely, if we have added $n$ edges labeled by $\alpha_i$ between $x_0$, $x_1$, $\dots$, $x_n$ successively, we put an $\alpha_i$-edge from $x_n$ to $x_0$ to have a cycle consisting of $n+1$ edges, which corresponds to a $\alpha_i$-orbit of size $n+1$. On the points where no $\alpha_i$-edges are involved, we can put any $\alpha_i$-edge in a way that the the extended graph is well-labeled and every point has a entering edge and a outgoing edge labeled by $\alpha_i$ (for example we can put a loop labeled by $\alpha_i$, corresponding to the fixed points). In the end, the graph represents an $\langle \alpha_k$, $\dots$, $\alpha_1$, $\beta \rangle$-action on $X$, i.e. $G$ will be a Schreier graph.

\begin{defn}
Let $G$, $G'$ be graphs labeled on a set $S^{\pm 1}$. A \textit{homomorphism} $f: G \rightarrow G'$ is a map sending vertices to vertices, edges to edges, such that
\begin{enumerate}
\item[$\cdot$] $f(i(e))=i(f(e))$ and $f(t(e))=t(f(e))$;
\item[$\cdot$] $l(e)=l(f(e))$,
\end{enumerate}
for all $e\in E(G)$.
\end{defn}
If there exists an injective homomorphism $f:G \rightarrow G'$, we say that $f$ is an \textit{embedding}, and $G$ \textit{embeds} in $G'$.

\begin{lem}\label{pathGen}
Let $k\geq 1$. Let $w_k=w_k(\alpha_k, \alpha_{k-1},\dots, \alpha_1,\beta)$ be a reduced word on $\{\alpha^{\pm 1}_k$, $\alpha^{\pm 1}_{k-1}$, $\dots$, $\alpha^{\pm 1}_1$, $\beta^{\pm 1}\}$. For every finite subset $F$ of $G_0$, there is an extension $G$ of $G_0$ on which the path $P(w_k,v_0)$ embeds in $G$, the image of $P(w_k,v_0)$ in $G$ does not intersect with $F$, and $G\setminus G_0$ is finite.
\end{lem}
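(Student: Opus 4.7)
The plan is to construct $G$ directly by adding finitely many $\alpha_i$-labeled edges to $G_0$ so as to realize $P(w_k,v_0)$ inside $X\setminus F$.

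First, since $w_k$ is reduced, it decomposes uniquely as
\[
w_k = \beta^{a_r}A_r\beta^{a_{r-1}}A_{r-1}\cdots A_1\beta^{a_0},
\]
where each $A_j\in\{\alpha_1^{\pm 1},\ldots,\alpha_k^{\pm 1}\}$ is a single $\alpha$-letter and each $a_j\in\Z$ (with $a_0,a_r$ possibly $0$; and whenever $a_j=0$ for some interior index $j$, reducedness forces $A_j$ and $A_{j+1}$ not to be mutual inverses). Since $\beta$ acts simply transitively on $X$, I identify $X$ with $\Z$ so that $\beta$ becomes $x\mapsto x+1$. Under this identification, giving an embedding of $P(w_k,v_0)$ amounts to choosing a starting integer $q_0$ and jump targets $q_1,\ldots,q_r\in\Z$: the path then traverses the intervals $I_j$ of integers from $q_j$ to $q_j+a_j$ inclusive, linked by $\alpha$-edges running from $p_j:=q_{j-1}+a_{j-1}$ to $q_j$ with label $A_j$.

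Second, I pick these parameters inductively: take $q_0=v_0$ and then choose $q_1,\ldots,q_r$ one by one, each so that every integer of $I_j$ lies outside the already-used finite set $F\cup I_0\cup\cdots\cup I_{j-1}$. This is possible at each stage because the complement of a finite set in $\Z$ is infinite. I then define $G$ to be $G_0$ together with, for $j=1,\ldots,r$, the directed edge $p_j\xrightarrow{A_j}q_j$ and its inverse.

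Third, the three required properties follow. Pairwise disjointness of the $I_j$'s guarantees that the $m+1$ vertices of $P(w_k,v_0)$ map to distinct elements of $X\setminus F$, so $P(w_k,v_0)$ embeds in $G$ with image disjoint from $F$. In particular, the vertices $p_1,\ldots,p_r,q_1,\ldots,q_r$ are pairwise distinct, so no two added edges share an initial vertex; combined with the fact that $G_0$'s edges are $\beta$-labeled while the added ones are $\alpha$-labeled, this makes $G$ well-labeled. Finally, $G\setminus G_0$ has only $2r$ directed edges, hence is finite. The only delicate point I foresee is well-labeling at a vertex where an interior empty $\beta$-block occurs ($a_j=0$ for some $1\le j\le r-1$): there $q_j=p_{j+1}$ carries two outgoing added edges labeled $A_j^{-1}$ and $A_{j+1}$, and these labels must differ --- which follows at once from the reducedness of $w_k$, namely $A_{j+1}\ne A_j^{-1}$.
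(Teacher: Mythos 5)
Your proof is correct, but it takes a genuinely different route from the paper's. The paper argues by induction on $k$, the number of distinct $\alpha$-generators: it writes $w_k$ in normal form with respect to $\langle \alpha_{k-1},\dots,\alpha_1,\beta\rangle \ast \langle \alpha_k\rangle$, invokes the induction hypothesis to embed the $\alpha_k$-free subwords into extensions $G_{2i-2}$ chosen so that they meet pairwise only in $G_0$, and then strings these together with chains of fresh $\alpha_k$-edges; the base case $k=1$ is delegated to Proposition~6 of the cited earlier paper. You instead refine the decomposition all the way down to single $\alpha$-letters separated by $\beta$-blocks, use the identification of the $\beta$-action with translation on $\Z$ to realize each $\beta$-block as an interval already present in $G_0$, and place these intervals disjointly by a greedy choice, joining consecutive ones by a single new $\alpha$-edge. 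This buys a self-contained, induction-free argument in which the only delicate point --- well-labeling at a vertex carrying two new $\alpha$-edges, which occurs exactly when an interior $\beta$-exponent vanishes --- is isolated and correctly resolved by reducedness ($A_{j+1}\neq A_j^{-1}$); the disjointness of the intervals handles injectivity and all other label conflicts, and finiteness of $G\setminus G_0$ is immediate. One small slip: you set $q_0=v_0$, conflating the abstract startpoint with its image in $X=\Z$; since the conclusion requires the \emph{entire} image of $P(w_k,v_0)$, including the image of $v_0$, to avoid $F$, you must also choose $q_0$ so that $I_0\cap F=\emptyset$ --- which of course follows from the same observation that the complement of a finite subset of $\Z$ is infinite, so nothing essential is affected.
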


\begin{proof}
Let us show this by induction on $k$. If $k=1$, it follows from Proposition 6 in \cite{Moon}. Indeed, in the proof of Proposition 6 in \cite{Moon}, we start by choosing any element $z_0\in X$ to construct a path. Since the set $X$ is infinite and there is no assumption on the starting point $z_0$ of the path, there are infinitely many choices for $z_0$.

For the proof of the induction step, consider the case
$$
w_k=\alpha^{a_{2m}}_k w^{2m-1}_{k-1}\alpha^{a_{2m-2}}_k\cdots \alpha^{a_4}_k w^3_{k-1}\alpha^{a_2}_{k} w^1_{k-1}.
$$
with $w^{i}_{k-1}=w^{i}_{k-1}(\alpha_{k-1},\dots, \alpha_1,\beta)$ a reduced word on $\{\alpha^{\pm 1}_{k-1}$, $\dots$, $\alpha^{\pm 1}_1$, $\beta^{\pm 1}\}$, for all $i$. To simplify the notation, we assume that $a_j$ is positive, $\forall j$.

Let $F\subset X$ be a finite subset of $X$. By hypothesis of induction, there is an extension $G_1$ of $G_0$ and an embedding $f^{1}$ such that $f^{1}:P(w^{1}_{k-1},v_0)\hookrightarrow G_1$ and the image of $P(w^1_{k-1},v_0)$ in $G_1$ does not intersect with $F$. Let $$f^{1}(v_0)=f^1\big(i(P(w^1_{k-1},v_0))\big)=:z_0$$ and $$f^1\big(t(P(w^1_{k-1},v_0))\big)=:z_1.$$ Inductively on each $2\leq i\leq m$, we apply the following algorithm:

\bigskip

\noindent \textbf{Algorithm}

\begin{enumerate}
\item Take an extension $G_{2i-2}$ of $G_0$ such that

\begin{enumerate}
\item[$\cdot$] $P(w^{2i-1}_{k-1}, v_{2i-2})$ embeds in $G_{2i-2}$ such that the image of $P(w^{2i-1}_{k-1}, v_{2i-2})$ does not intersect with $F$;
\item[$\cdot$] $ G_{2i-2}\cap G_{2i-3}=G_0$ (this is possible since there are infinitely many extensions $G'_{2i-2}$ of $G_0 $ by hypothesis of induction and $G_{2i-3}\setminus G_0$ is finite).
\end{enumerate}

\item Let $f^{2i-1}:P(w^{2i-1}_{k-1}, v_{2i-2})\hookrightarrow G_{2i-2}\cup G_{2i-3}=:G'_{2i-1}$ with
\begin{enumerate}
\item[$\cdot$] $f^{2i-1}\big(i( P(w^{2i-1}_{k-1}, v_{2i-2})  )\big)=f^{2i-1}(v_{2i-2})=:z_{2i-2}$;
\item[$\cdot$] $f^{2i-1}\big(t( P(w^{2i-1}_{k-1}, v_{2i-2})  )\big)=:z_{2i-1}$.
\end{enumerate}

\item Choose $|a_{2i-2}|-1$ points $\{ p^{(a_{2i-2})}_1$, $\dots$, $p^{(a_{2i-2})}_{|a_{2i-2}|-1}\}$ outside of the finite set of all points appeared until now, and put the directed edges labeled by $\alpha_k$ from
\begin{enumerate}
\item[$\cdot$] $z_{2i-3}$ to $p^{(a_{2i-2})}_1$;
\item[$\cdot$] $p^{(a_{2i-2})}_j$ to $p^{(a_{2i-2})}_{j+1}$, $\forall 1 \leq j \leq |a_{2i-2}|-2$;
\item[$\cdot$] $p^{(a_{2i-2})}_{|a_{2i-2}|-1}$ to $z_{2i-2}$,
\end{enumerate}
and let $G_{2i-1}:=G'_{2i-1}\cup\{$the additional $\alpha_k$-edges between $z_{2i-3}$ and $z_{2i-2}\}$.
\end{enumerate}

In the ends, we choose new $|a_{2m}|$ points $\{ p^{(a_{2m})}_1$, $\dots$, $p^{(a_{2m})}_{|a_{2m}|}\}$ and put the directed edges labeled by $\alpha_k$ from $z_{2m-1}$ to $p^{(a_{2m})}_1$, and from $p^{(a_{2m})}_j$ to $p^{(a_{2m})}_{j+1}$, $\forall 1 \leq j \leq |a_{2m}|$, so that we have $\alpha^{a_{2m}}_k z_{2m-1}=z_{2m}$.

By construction, the resulting graph $G_{2m-1}\cup P(\alpha^{a_{2m}}, v_{2m-1})=:G$ is an extension of $G_0$ satisfying $P(w_k,v_0)\hookrightarrow G$ such that the image of $P(w_k,v_0)$ does not intersect with $F$.
\end{proof}

\begin{lem}\label{cycleGen}
Let $w=w(\alpha_{n},\dots, \alpha_1,\beta)$ be a weakly cyclically reduced word on $\{\alpha^{\pm 1}_{n}$, $\dots$, $\alpha^{\pm 1}_1$, $\beta^{\pm 1}\}$ such that $\alpha_{i}$ appears in the word $w$ for some $i$ (i.e. $w\notin \langle\beta\rangle$). For every finite subset $F$ of $G_0$, there exists an extension $G_{n+1}$ of $G_0$ such that the cycle $C(w,v_0)$ embeds in $G_{n+1}$ and the image of $C(w,v_0)$ in $G_0$ does not intersect with $F$.
\end{lem}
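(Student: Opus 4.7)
My plan is to mimic the algorithm from the proof of Lemma~\ref{pathGen}, but to arrange for the final segment to close the loop back to $v_0$. Let $i\in\{1,\dots,n\}$ be the largest index such that $\alpha_i$ appears in $w$ (which exists by hypothesis).

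If $w=\alpha_i^c$ for some $c\neq 0$, I would simply pick $|c|$ distinct vertices in $X\setminus F$ and join them by $|c|$ fresh $\alpha_i$-edges arranged in a cycle; the resulting extension contains an embedded copy of $C(w,v_0)$ disjoint from $F$.

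Otherwise $w$ contains both $\alpha_i$-letters and letters of strictly smaller index. Cyclically rotating $w$, which does not change the isomorphism class of $C(w,v_0)$ as a labeled graph, I may assume
$$
w=\alpha_i^{a_s}\,u^{s-1}\,\alpha_i^{a_{s-1}}\,u^{s-2}\cdots\alpha_i^{a_1}\,u^0,
$$
with each $a_\ell$ a non-zero integer and each $u^\ell$ a non-empty reduced word on $\{\alpha_{i-1}^{\pm 1},\dots,\beta^{\pm 1}\}$; such a rotation is possible because the maximal runs of $\alpha_i$-letters and of other letters in $w$ alternate cyclically. I would then follow the algorithm of Lemma~\ref{pathGen}: set $z_0:=v_0\in X\setminus F$ and, for $\ell=0,1,\dots,s-1$, invoke Lemma~\ref{pathGen} (with $k=i-1$) to embed each path $P(u^\ell,z_{2\ell})$ obtaining some endpoint $z_{2\ell+1}$, then insert $|a_{\ell+1}|-1$ fresh vertices together with $|a_{\ell+1}|$ fresh $\alpha_i$-edges linking $z_{2\ell+1}$ to $z_{2\ell+2}$, at every step enlarging the finite set to be avoided so that it contains all vertices already used.

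The only novelty comes at the final $\alpha_i^{a_s}$-segment: instead of letting it terminate at a new vertex, I would route it back to $v_0$ by inserting $|a_s|-1$ fresh intermediate vertices and $|a_s|$ new $\alpha_i$-edges forming a path from $z_{2s-1}$ to $v_0$. The main obstacle will be verifying well-labeledness of this closing arc, which requires that $z_{2s-1}$ have no pre-existing outgoing $\alpha_i^{\mathrm{sgn}(a_s)}$-edge and that $v_0$ have no pre-existing incoming one. Both conditions hold because $u^{s-1}$ and $u^0$ are words on letters of index strictly less than $i$: the edges of the embedded sub-paths incident at $z_{2s-1}$ and at $v_0$ carry labels different from $\alpha_i^{\pm 1}$, and by construction no other $\alpha_i$-edges touch these two vertices. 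This is precisely why the cyclic decomposition is chosen so that the closing segment is an $\alpha_i$-power: Lemma~\ref{pathGen} offers no control over the endpoint of a lower-index sub-path, so the cycle could not be closed through a lower-index segment by this method.
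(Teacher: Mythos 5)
Your proof is correct and follows essentially the same strategy as the paper: peel off one $\alpha_i$-power of $w$ to serve as the closing arc, embed the remainder of $w$ as a path avoiding $F$, and close the cycle with a chain of fresh $\alpha_i$-edges through new intermediate vertices, which is well-labeled because the two endpoints carry no incident $\alpha_i$-edges. The only difference is that the paper invokes Lemma~\ref{pathGen} once on the entire remaining word $w'=w_{2m-1}\alpha_i^{a_{2m-2}}\cdots \alpha_i^{a_2}w_1$ (for an arbitrary index $i$ occurring in $w$) instead of re-running its internal algorithm with $i$ maximal; your explicit handling of the pure-power case and of the cyclic rotation just makes precise details the paper leaves implicit.
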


\begin{proof}
Let us consider the case
$$
w=\alpha^{a_{2m}}_{i} w_{2m-1}\alpha^{a_{2m-2}}_{i}\cdots \alpha^{a_4}_{i} w_3\alpha^{a_2}_{i} w_1
$$
written as the normal form of $\langle \alpha_n, \dots, \alpha_{i+1}, \alpha_{i-1},\dots,\alpha_1,\beta\rangle \ast \langle \alpha_i\rangle$.

Since $w'=w_{2m-1}\alpha^{a_{2m-2}}_{i}\cdots \alpha^{a_4}_{i} w_3\alpha^{a_2}_{i} w_1$ is reduced, by Lemma \ref{pathGen}, there is an extension $G'_{n+1}$ of $G_0$ and a homomorphism $f:P(w', v_0)\rightarrow G'_{n+1}$ such that $f(P(w', v_0))$ is a path in $G'_{n+1}$ outside of $F$. Let $f(v_0)=:z_0$ be the startpoint of $f(P(w',v_0))$ and $f(w'(z_0))=:z_{2m-1}$ be the endpoint of $f(P(w',v_0))$.

 Choose $|a_{2m}|-1$ new points $\{ p_{a_m}$, $\dots$, $p_{|a_{2m}|-1}\}$ and put the directed edges labeled by $\alpha^{sign(a_{2m})}_i$ from
\begin{enumerate}
\item[$\cdot$] $z_{2m-1}$ to $p_1$;
\item[$\cdot$] $p_j$ to $p_{j+1}$, $\forall 1 \leq j \leq |a_{2m}|-2$;
\item[$\cdot$] $p_{|a_{2m}|-1}$ to $z_{0}$.
\end{enumerate}

By construction, the resulting graph $G_{n+1}:=G'_{n+1}\cup P(\alpha^{a_{2m}}, v_{2m-1})$ is an extension of $G_0$ and $C(w,v_0)$ embeds in $G_{n+1}$ outside of $F$.
\end{proof}

Let $c=c(\alpha_{n},\dots,\alpha_1,\beta)$ be a weakly cyclically reduced word on $\{\alpha^{\pm 1}_{n}$, $\dots$, $\alpha^{\pm 1}_1$, $\beta^{\pm 1}\}$ such that $c\notin \langle \beta \rangle$ and $w=w(\alpha_{n}$, $\alpha_{n-1}$, $\dots$, $\alpha_1$, $\beta)$ be a reduced word on $\{\alpha^{\pm 1}_{n}$, $\dots$, $\alpha^{\pm 1}_1$, $\beta^{\pm 1}\}$ such that $w\notin \langle c \rangle$.
Let $C(c, v_0)$ be the cycle of $c$ with startpoint at $v_0$, and let $P(w, v_0)$ be the path of $w$ with the same startpoint $v_0$ as $C(c,v_0)$ such that every vertex of $P(w,v_0)$ (other than $v_0$) is distinct from every vertex in $C(c,v_0)$. Let $C(c, wv_0)$ be the cycle of $c$ with startpoint at $wv_0$ such that every vertex of $C(c,wv_0)$ (other than $wv_0$) is distinct from every vertex in $P(w, v_0)\cup C(c, v_0)$. Let us denote by $Q_0(c,w)$ the union of $C(c, v_0)$, $P(w, v_0)$ and $C(c, wv_0)$. Let $Q(c,w)$ be the well-labeled graph obtained from $Q_0(c,w)$ by identifying the successive edges with the same initial vertex and the same label.
Notice that the well-labeled graph $Q(c,w)$ can have one, two or three cycles, and in each type of $Q(c,w)$, the quotient map $Q_0(c,w) \twoheadrightarrow Q(c,w)$ restricted to $C(c,v_0)$ and to $C(c,wv_0)$ is injective (each one separately).

\begin{lem}\label{FF}
There is an extension $G_{n+1}$ of $G_0$ such that $Q(c,w)$ embeds in $G_{n+1}$.
\end{lem}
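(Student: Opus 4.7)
The plan is to build $G_{n+1}$ in three successive stages, one per component of $Q_0(c,w)$, and then verify that the identifications forced by well-labelledness along the construction match exactly the defining identifications of $Q(c,w)$. At each stage the idea is to trace the relevant word letter by letter from its startpoint: wherever the current extension already has an outgoing edge at the current vertex whose label matches the next letter, well-labelledness forces us to follow that edge; wherever none is prescribed, we are free to add a new $\alpha_i$-edge to a fresh vertex of $X$ chosen outside the (finite) set of previously used vertices, realising each genuinely new sub-path via Lemma \ref{pathGen} applied to the relevant sub-word.

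First, Lemma \ref{cycleGen} (applied to $c$, which is weakly cyclically reduced and not in $\langle\beta\rangle$) yields an extension $E_1$ of $G_0$ in which $C(c,v_0)$ embeds; fix the image $\tilde v_0\in X$ of the basepoint and denote the image cycle by $\tilde C_1$. Second, tracing $w$ from $\tilde v_0$ as above produces, inside some extension $E_2\supset E_1$, an image $\tilde P$ of $P(w,v_0)$ ending at some vertex $\tilde y\in X$; reducedness of $w$ prevents any immediate backtracking conflict. Third, tracing $c$ from $\tilde y$ in the same manner, with the closing technique of Lemma \ref{cycleGen} handling the final segment that must return to $\tilde y$, produces a cycle $\tilde C_2\cong C(c,wv_0)$ in the final extension $G_{n+1}:=E_3$. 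The freedom we have in choosing fresh vertices along each trace is used in particular to ensure that the last edge of $\tilde C_2$ can be closed back to $\tilde y$ without violating well-labelledness.

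The natural map $Q_0(c,w)\to G_{n+1}$ sending each component to its traced image factors through $Q_0\twoheadrightarrow Q(c,w)$, because the coincidences forced by well-labelledness along the traces are precisely the defining identifications of $Q(c,w)=Q_0(c,w)/\!\sim$; and the factored map $Q(c,w)\hookrightarrow G_{n+1}$ is injective, since every freshly added vertex is distinct from all previously used ones, so no extra collapse occurs. The main obstacle is exactly this bookkeeping step --- verifying that the coincidences forced by our construction correspond bijectively to the relation $\sim$ --- which rests on the excerpt's observation that $Q_0\twoheadrightarrow Q$ restricts to an injection on each of the two cycles $C(c,v_0)$ and $C(c,wv_0)$ separately (so those cycles embed genuinely), together with the standing hypotheses that $c$ is weakly cyclically reduced, $c\notin\langle\beta\rangle$, $w$ is reduced and $w\notin\langle c\rangle$, which rule out the degenerate cases in which further collapse would occur.
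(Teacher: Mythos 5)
Your proposal has a genuine gap: it treats the construction as if every edge of $Q(c,w)$ could be realised either by following an existing edge or by jumping to a freshly chosen vertex, but this is false for the letter $\beta$. In an extension of $G_0$ the $\beta$-edges are \emph{entirely} prescribed from the start (every vertex already has its unique outgoing and incoming $\beta$-edge, forming a single bi-infinite line), so no new $\beta$-edge can ever be added and no choice of ``fresh vertex'' is available when the next letter is $\beta^{\pm1}$. Consequently, a cycle of $Q(c,w)$ all of whose edges are labeled by $\beta^{\pm1}$ could not embed in \emph{any} extension of $G_0$, and an arc of $Q(c,w)$ labeled only by $\beta^{\pm1}$ that must join two already-placed branch vertices cannot be closed up at will. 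The entire content of the paper's proof is to rule this out: it reduces the lemma to showing that every cycle of $Q(c,w)$ contains an $\alpha_i^{\pm1}$-edge, which is immediate when $Q$ has one or two cycles (since they represent $C(c,v_0)$ and $C(c,wv_0)$ and $c\notin\langle\beta\rangle$), but in the three-cycle case requires a nontrivial combinatorial Claim about free groups --- if $c=\gamma\lambda$ with $\gamma\notin\langle\beta\rangle$, $\lambda\in\langle\beta\rangle$ is conjugate to $\gamma^{\pm1}\lambda'$ with $\lambda'\in\langle\beta\rangle$, then in fact $wc=cw$ (and the sign $-1$ is impossible) --- combined with the fact that $w\notin\langle c\rangle$ forces $cw\neq wc$ in the three-cycle case. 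Nothing in your argument plays the role of this Claim.

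Relatedly, your assertion that ``the coincidences forced by well-labelledness along the traces are precisely the defining identifications of $Q(c,w)$'' is exactly the delicate point and is not justified. For the interior of a new arc one can indeed avoid accidental collisions by starting each new $\alpha$-block far away (this is what Lemma \ref{pathGen} does), but the forced re-entry of the second cycle into the already-built subgraph, and the closing of that cycle back to its startpoint, are only possible because the connecting arcs contain $\alpha$-letters in the right places. Without the paper's Claim you cannot exclude the configuration in which two of the three arcs of $Q(c,w)$ are purely $\beta$-labeled, and in that configuration your construction breaks down (and indeed the lemma itself would fail). So the proposal follows the right general strategy of assembling the embedding from Lemmas \ref{pathGen} and \ref{cycleGen}, but it omits the one step that carries the mathematical content.
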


\begin{figure}
\centering
\includegraphics[width=4cm]{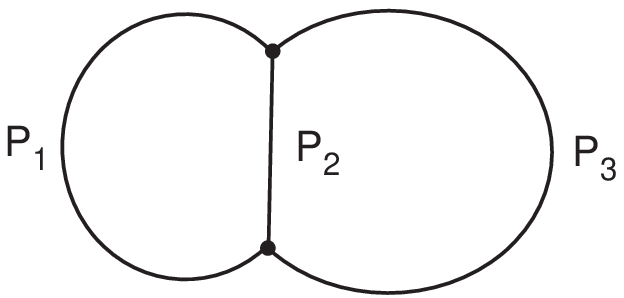}
\caption{}
\label{type3}
\end{figure}

\begin{proof}
By Lemma \ref{pathGen} and \ref{cycleGen}, it is enough to show that every cycle in $Q$ contains edges labeled by $\alpha^{\pm 1}_{i}$ for some $i$. For the cases where $Q$ has one or two cycles, it is clear since the cycles in $Q$ represent $C(c,v_0)$ and $C(c,wv_0)$, and $c\notin \langle \beta \rangle$. In the case where $Q(c,w)$ has three cycles, $Q(c,w)$ has three paths $P_1$, $P_2$ and $P_3$ such that $P_1\cap P_2\cap P_3$ are exactly two extreme points of $P_i$'s, and $P_1\cup P_2$, $P_2\cup P_3$ and $P_1\cup P_3$ are the three cycles in $Q(c,w)$ (see Figure \ref{type3}). So we need to prove that, if one of the three paths has edges labeled only on $\{\beta^{\pm 1}\}$, then the other two paths both contains edges labeled by $\alpha^{\pm 1}_{i}$ for some $i$. For this, it is enough to prove:

\bigskip

\noindent \textbf{Claim.} If the reduced word $c=\gamma\lambda$ is conjugate to the reduced word $\gamma\lambda'$ via a reduced word $w$, where $\gamma\in \langle \alpha_n$, $\alpha_{n-1}$, $\dots$, $\beta \rangle \setminus \langle\beta\rangle$ and $\lambda\in\langle\beta\rangle$, then $wc=cw$. Furthermore, the word $c$ can not be conjugate to the reduced word $\gamma^{-1}\lambda'$ with $\lambda'\in \langle \beta\rangle$.

\bigskip

Let us see how we can conclude Lemma \ref{FF} using the Claim. First of all, notice that $c$ does not commute with $w$ since we are treating the case where $Q$ has three cycles. More precisely, in a free group, two elements commute if and only if they are both powers of the same word. So if $cw=wc$, then $c=\gamma^k$ and $w=\gamma^l$ with $k\neq l$, where $\gamma$ is a non-trivial word, so that $Q$ has one cycle. Suppose that $P_1$ consists of edges labeled only on $\{\beta^{\pm 1}\}$. One of the cycles among $P_1\cup P_2$, $P_2\cup P_3$ and $P_1\cup P_3$ consists of edges labeled by the letters of $c$ up to cyclic permutation, let us say $P_1\cup P_2$ (i.e. if $c=c_1\cdots c_m$, given any startpoint $v_0$ in $P_1\cup P_2$, the directed edges of the cycle $C(c,v_0)$ are labeled on a cyclic permutation of the sequence $\{c_m$, $\dots$, $c_1\}$). Another cycle among $P_2\cup P_3$ and $P_1\cup P_3$ consists of edges labeled by the letters of the reduced form of $w^{-1}cw$ up to cyclic permutation. Since $c\notin \langle\beta\rangle$, the path $P_2$ has edges labeled by $\alpha^{\pm 1}_i$ for some $i$. Now, if the cycle representing $w^{-1}cw$ is $P_1\cup P_3$, then the path $P_3$ has edges labeled by $\alpha^{\pm 1}_i$ since $w^{-1}cw\notin \langle\beta\rangle$ and $P_1$ has only edges labeled on $\{\beta^{\pm 1}\}$(this is because two words in the free group $\mathbb{F}$ define conjugate elements of $\mathbb{F}$ if and only if their cyclic reduction in $\mathbb{F}$ are cyclic permutations of one another). Suppose now that the cycle representing $w^{-1}cw$ is $P_2\cup P_3$ and $P_3$ has edges labeled only on $\{\beta^{\pm 1}\}$. Then, $c$ would be the form $\gamma\lambda$ up to cyclic permutation where $\gamma\in \langle \alpha_n$, $\alpha_{n-1}$, $\dots$, $\beta \rangle \setminus \langle\beta\rangle$ (representing $P_2$) and $\lambda\in \langle\beta\rangle$ (representing $P_1$); and $w^{-1}cw$ would be the form $\gamma^{\pm 1}\lambda'$ up to cyclic permutation where $\lambda'\in \mathbb{F}_n$(representing $P_3$); but the Claim tells us that this is not possible, therefore $P_3$ contains edges labeled by $\alpha^{\pm 1}_i$ for some $i$.

\bigskip

Now we prove the Claim. Let $c=\gamma\lambda$ and $w^{-1}cw=\gamma\lambda'$ such that $\gamma\in \langle \alpha_n$, $\alpha_{n-1}$, $\dots$, $\beta \rangle \setminus \langle\beta\rangle$ and $\lambda$, $\lambda' \in \langle\beta\rangle$.
Without loss of generality, we can suppose that $\gamma=\gamma_m\lambda_{m-1}\cdots \lambda_1\gamma_1$, with $\gamma_i \in \langle \alpha_n$, $\alpha_{n-1}$, $\dots$, $\beta \rangle \setminus \langle\beta\rangle$ and $\lambda_i\in\langle\beta\rangle$. Since $\gamma\lambda$ and $\gamma\lambda'$ are conjugate in a free group, there exists $1\leq k\leq m$ such that
$$\gamma_k\lambda_{k-1}\cdots\lambda_1\gamma_1\lambda\gamma_m\lambda_{m-1}\cdots\gamma_{k+1}\lambda_k = \gamma\lambda'= \gamma_m\lambda_{m-1}\cdots \lambda_1\gamma_1\lambda'.$$
By identification of each letter, one deduces that $\lambda'=\lambda_k=\lambda_j$, for every $j$ multiple of $k$ in $\mathbb{Z}/m\mathbb{Z}$, and $\lambda=\lambda_{m-k}$. In particular, $\lambda=\lambda'$ so that $c=\gamma\lambda=\gamma\lambda'=w^{-1}cw$ and thus $cw=wc$. For the seconde statement, suppose by contradiction that there exists $w$ such that $w^{-1}cw=\gamma^{-1}\lambda'$. Then by the similar identification as above we deduce that $\lambda^{-1}=\lambda'$, so $w^{-1}cw$ would be a cyclic permutation of $c^{-1}$, which is clearly not possible.

\end{proof}

\section{Construction of generic actions of free groups}

Let $X$ be an infinite countable set. We identify $X=\mathbb{Z}$. Let $\beta$ be a simply transitive permutation of $X$ (which is identified to the translation $x\mapsto x+1$).

Let $c$ be a cyclically reduced word on  $\{ \alpha^{\pm 1}_n$, $\alpha^{\pm 1}_{n-1}$, $\dots$, $\alpha^{\pm 1}_1$, $\beta^{\pm 1} \}$ such that the sum $S_c(\beta)$ of the exponents of $\beta$ in the word $c$ is zero. Thus necessarily $c$ contains $\alpha_i$ for some $i$.

Let us denote by $S^{+}_c(\beta)$ the sum of positive exponents of $\beta$ in the word $c$; by denoting $S^{-}_c(\beta)$ the sum of negative exponents of $\beta$ in the word $c$, we have $0=S_c(\beta)=S^{+}_c(\beta)+S^{-}_c(\beta)$ (for example, if $c=\alpha_1 \beta^{-1}\alpha_2 \beta^{-1}\alpha^2_n \beta^2$, then $S^{+}_c(\beta)=2$). If $c$ does not contain $\beta$, we set $S^{+}_c(\beta)=0$.

Let $\{A_{m}\}_{m\geq 1}$ be a sequence of pairwise disjoint intervals of $X$ such that $|A_m|\geq m+ 2S^{+}_c(\beta)$, $\forall m\geq 1$. Clearly this sequence is a pairwise disjoint F\o lner sequence for $\beta$.

\begin{prop}\label{Fnew}
Let $c$ be a cyclically reduced word as above.
There exists $\alpha=(\alpha_1,\dots,\alpha_n) \in (Sym(X))^n$ such that $\langle \alpha_n$, $\alpha_{n-1}$, $\dots$, $\alpha_1$, $\beta \rangle$ is free of rank $n+1$, and
\begin{enumerate}
\item[(1)] the action of $\langle \alpha_n$, $\alpha_{n-1}$, $\dots$, $\alpha_1$, $\beta \rangle$ on $X$ is transitive and faithful;
\item[(2)] for all non trivial word $w$ on $\{\alpha^{\pm 1}_n$, $\alpha^{\pm 1}_{n-1}$, $\dots$, $\alpha^{\pm 1}_1$, $\beta^{\pm 1} \}$ with $w\notin\langle c\rangle$, there exist infinitely many $x\in X$ such that $cx=x$, $cwx=wx$ and $wx \neq x$;
\item[(3)] there exists a pairwise disjoint F\o lner sequence $\{A_k\}_{k\geq 1}$ for $\langle \alpha_n$, $\alpha_{n-1}$, $\dots$, $\alpha_1$, $\beta \rangle$ which is fixed by $c$, and $|A_k|=k$, $\forall k\geq 1$;
\item[(4)] for all $k\geq 1$, there are infinitely many $\langle c \rangle$-orbits of size $k$;
\item[(5)] every $\langle c \rangle$-orbit is finite;
\item[(6)] for every finite index subgroup $H$ of $\langle \alpha_n$, $\alpha_{n-1}$, $\dots$, $\alpha_1$, $\beta \rangle$, the $H$-action on $X$ is transitive.
\end{enumerate}
\end{prop}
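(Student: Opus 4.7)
My plan is to apply the Baire category theorem to the Polish space $(Sym(X))^n$: for each of the six requirements I would exhibit a countable family of dense open (or dense $G_\delta$) subsets whose intersection implies the requirement, and then take any $\alpha$ in the common intersection. The extension lemmas \ref{pathGen}, \ref{cycleGen} and \ref{FF} are exactly the density tools needed: given any basic open neighborhood (a prescription of the $\alpha_i$'s on a finite subset of $X$) they produce a compatible extension of $G_0$ realising the desired path, cycle, or $Q(c,w)$ configuration outside an arbitrary finite set.

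I would first dispatch items (1), (2), (4), (5), which are the ``marginal'' conditions. Transitivity in (1) is automatic from $\beta$, so only faithfulness/freeness remains; for each non-trivial reduced word $w$ in the abstract free group $\F_{n+1}$, the set $\{\alpha : w(\alpha,\beta)\neq\mathrm{id}\}$ is open and Lemma \ref{pathGen} gives density. For (2), fixing $w\notin\langle c\rangle$ and $N\geq 1$, the set of $\alpha$ admitting $N$ pairwise disjoint embedded copies of $Q(c,w)$ is open, and iterated application of Lemma \ref{FF} (each time avoiding the growing finite data) gives density. For (5), for each $x\in X$ the set $\{\alpha:\exists n\geq 1,\ c^n x = x\}$ is an open union of clopens, and density is obtained by closing up the partial $c$-orbit of $x$ via a cycle extension using a suitable power of $c$ and Lemma \ref{cycleGen}. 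For (4), for each pair $(k,N)$ I would produce $N$ disjoint cycles of $c^k$ (which is weakly cyclically reduced because $c$ is cyclically reduced), choosing the startpoints so that the intermediate iterates $c^j v_0$ for $1\leq j<k$ are pairwise distinct; this yields $N$ genuine $c$-orbits of size exactly $k$.

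Condition (3) is more delicate because it must interact with the prescribed $\beta$-intervals $A_m$. Given an $A_m$ of length at least $m+2S^+_c(\beta)$, I would trim $S^+_c(\beta)$ vertices from each end, leaving a sub-interval $\widetilde A_m$ of size exactly $m$; the slack ensures that for any $x\in\widetilde A_m$ every $\beta$-subword appearing in $c$ keeps the relevant translates inside $A_m$. I would then force, by density, that $c$ acts as the identity on $\widetilde A_m$ and that the few $\alpha_i$-edges used in realising this contribute negligibly to the $\alpha_i$-boundary, making $\{\widetilde A_m\}$ both Følner for the whole group and fixed by $c$. Condition (6) is handled by enumerating the countably many finite-index subgroups $H$ of $\F_{n+1}$ (countable since $\F_{n+1}$ is finitely generated) and, for each such $H$ and each pair $(x,y)\in X^2$, declaring dense open the set $\{\alpha:\exists h\in H,\ hx=y\}$, density being a path extension along a chosen word in $H$ from $x$ to $y$ via Lemma \ref{pathGen}.

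The main obstacle, flagged in the introduction, is the joint genericity of (4) and (5): simultaneously producing infinitely many $c$-cycles of \emph{every} size $k$ and no infinite $c$-orbit, with $c$ now a multi-letter word in $n+1$ generators rather than a single generic permutation as in \cite{Moon}. The subtle point is that a cycle of $c^k$ embedded by Lemma \ref{cycleGen} can correspond to a shorter $c$-orbit if the intermediate vertices $c^j v_0$ collide; the flexibility of that lemma to place the embedded cycle outside any prescribed finite set is exactly what lets one avoid such collisions and obtain honest orbits of size $k$. Once all six families of dense open sets are verified, any $\alpha$ in their total intersection---which is dense $G_\delta$ by Baire---satisfies the proposition.
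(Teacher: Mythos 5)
Your overall architecture (six generic subsets of $(Sym(X))^n$, with density supplied by Lemmas \ref{pathGen}, \ref{cycleGen} and \ref{FF}) is exactly the paper's, and your treatment of (1), (2), (4) and (5) matches it in substance. The genuine gap is in condition (3). You propose to force, by density, that $c$ acts as the identity on the trimmed interval $\widetilde A_m$ and that ``the few $\alpha_i$-edges used in realising this contribute negligibly to the $\alpha_i$-boundary.'' But realising $cx=x$ for each of the $m$ points $x\in\widetilde A_m$ requires defining each $\alpha_i$ occurring in $c$ on a number of points proportional to $m$, and nothing in your construction prevents all of these $\alpha_i$-edges from leaving $\widetilde A_m$; in that case $|\widetilde A_m\vartriangle\alpha_i\widetilde A_m|/|\widetilde A_m|$ stays bounded away from $0$ and $\{\widetilde A_m\}$ fails to be F\o lner for $\alpha_i$. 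The missing idea is the paper's set $\mathcal{U}_3$: one forces, generically, that \emph{all} the $\alpha_i$ fix an infinite subsequence of the prescribed intervals $A_m$ \emph{pointwise}. On such an $A_m$ the word $c$ moves the points of $E_m=\bigcap_j\beta^{b_j+\cdots+b_1}(A_m)$ only through its $\beta$-letters, the hypothesis $S_c(\beta)=0$ returns each such point to itself, and the F\o lner property for every generator is automatic since each $\alpha_i$ fixes $A_m\supseteq E_m$ pointwise. Your trimming by $S^+_c(\beta)$ on each side is the right size estimate, but it only does its job after this pointwise-fixing step, which your plan omits.

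A secondary problem is (6): you claim density of $\{\alpha:\exists h\in H,\ hx=y\}$ ``via Lemma \ref{pathGen},'' but that lemma produces an embedded path \emph{disjoint} from any prescribed finite set --- it does not connect two prescribed vertices $x$ and $y$, and connecting them along a word of $H$ is constrained by the already-fixed $\beta$-edges (note $H\cap\langle\beta\rangle=\langle\beta^N\rangle$ only reaches one residue class mod $N$). The paper sidesteps this by taking $\mathcal{U}_6=\mathcal{W}\times(Sym(X))^{n-1}$, where $\mathcal{W}$ is the generic set of Proposition 4 of \cite{Moon} for the rank-two subgroup $\langle\alpha_1,\beta\rangle$, and then observing that a finite-index subgroup $H$ of the whole group meets $\langle\alpha_1,\beta\rangle$ in a finite-index subgroup whose transitivity already forces that of $H$. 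You would need either to quote that result or to redo its nontrivial density argument; the reduction you sketch is not yet one.
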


With the notion of the permutation type, the conditions (4) and (5) mean that the word $c$ has the permutation type $(\infty$, $\infty$, $\dots$, $;$ $0)$.

\begin{proof}

For the proof, we are going to exhibit six generic subsets of $(Sym(X))^n$ that will do the job.

We start by claiming that the set
$$
\mathcal{U}_1 = \{\alpha=(\alpha_1,\dots,\alpha_n)\in (Sym(X))^n \mid \forall k \in \mathbb{Z}\setminus \{0\}, \exists x\in X \textrm{ such that } c^kx\neq x\}
$$
is generic in $(Sym(X))^n$.
Indeed, for every $k \in \mathbb{Z}\setminus \{0\}$, let $\mathcal{V}_k=\{\alpha\in (Sym(X))^n| \forall x\in X$, $c^kx= x \}$. The set $\mathcal{V}_k$ is closed since if $\{\gamma_m\}_{m\geq 1}$ is a sequence in $\mathcal{V}_k$ converging to $\gamma$, then $c^k(\gamma_m)$ converges to $c^k(\gamma)$. To see the interior of $\mathcal{V}_k$ is empty, let $\alpha\in \mathcal{V}_k$ and let $F\subset X$ be a finite subset. There is an extension $G_{n+1}$ of $G_0$ such that $P(c^k(\alpha'),v_0)$ embeds in $G_{n+1}$ outside of $F$ by Lemma \ref{pathGen}. So in particular there is $x\in X\setminus F$ such that $c^k(\alpha')x\neq x$, so $\alpha'\notin \mathcal{V}_k$. By defining $\alpha'|_F=\alpha|_F$, we have shown that $\mathcal{U}_1$ is generic in $(Sym(X))^n$.

\bigskip

Let us show that the set

\bigskip

\noindent $ \mathcal{U}_2= \{ \alpha=(\alpha_1,\dots,\alpha_n) \in (Sym(X))^n | \textrm{ for every } w\neq 1 \in \langle\alpha_n,\dots,\alpha_1,\beta\rangle \setminus \langle c \rangle,$

$\,$ there exist infinitely many $x\in X$ such that $cx=x$, $cwx=wx$ and $wx \neq x\}$

\bigskip

\noindent is generic in $(Sym(X))^n$.

Indeed, for every non trivial word $w$ in $\langle\alpha_n,\dots,\alpha_1,\beta\rangle \setminus \langle c \rangle$, let
$\mathcal{V}_w =\{\alpha\in (Sym(X))^n|$ there exists a finite subset $K\subset X$ such that $(\textrm{Fix}(c)\cap w^{-1}\textrm{Fix}(c)\cap \textrm{supp}(w))\subset K\}=\bigcup_{K \textrm{finite}\subset X}\{\alpha\in (Sym(X))^n|(\textrm{Fix}(c)\cap w^{-1}\textrm{Fix}(c)\cap \textrm{supp}(w))\subset K\}$.
We shall show that the set $\mathcal{V}_w$ is meagre.
It is an easy exercise to show that the set
$$\mathcal{V}_{w,K}=\{\alpha\in (Sym(X))^n|(\textrm{Fix}(c)\cap w^{-1}\textrm{Fix}(c)\cap \textrm{supp}(w))\subset K\}$$
is closed. To show that the interior of $\mathcal{V}_{w,K}$ is empty, let $\alpha\in \mathcal{V}_{w,K}$, and $F\subset X$ be a finite subset. We need to prove that for some $\alpha'$ defined as $\alpha'|_F=\alpha|_F$, we can extend the definition of $\alpha'$ outside of the finite subset such that $\alpha'\notin \mathcal{V}_{w,K}$. By Lemma \ref{FF}, we can take an extension $G_{n+1}$ of $G_0$ such that $Q(c(\alpha'),w)$ embeds in $G_{n+1}$ outside of $F\cup \alpha(F)\cup K$, which proves the genericity of $\mathcal{U}_2$.

\bigskip

Now let us show that the set

\bigskip

\noindent $\mathcal{U}_3 =\{ \alpha=(\alpha_1,\dots,\alpha_n) \in (Sym(X))^n |$ there exists $\{A_{m_k}\}_{k\geq 1}$ a subsequence of

$\,\,\,\,\, \{A_m\}_{m\geq 1}$ such that $A_{m_k}\subset \mathrm{Fix}(\alpha_i)$, $\forall k\geq 1$, $\forall 1\leq i\leq n$\}

\bigskip

\noindent is generic in $(Sym(X))^n$.

Indeed, the set $\mathcal{U}_3$ can be written as $\mathcal{U}_3=\bigcap_{N\geq 1}\{\alpha=(\alpha_1,\dots,\alpha_n)\in (Sym(X))^n| \exists k\geq N$ such that $A_k\subset \Fix(\alpha_i)$, $\forall i\}$. We claim that for every $N\geq 1$, the set $\mathcal{V}_N=\{\alpha\in (Sym(X))^n| \forall k\geq N$, $A_k \subsetneq \cap_i \Fix(\alpha_i)\}$ is closed and of empty interior. It is closed since $\mathcal{V}_N=\bigcap_{k\geq N}\{\alpha\in (Sym(X))^n|A_k \subsetneq \cap_i \Fix(\alpha_i)\}$ and the set $\{\alpha\in (Sym(X))^n|A_k \subsetneq \cap_i \Fix(\alpha_i)\}$ is clearly closed. For the emptiness of its interior, let $\alpha\in\mathcal{V}_N$ and let $F\subset X$ be a finite subset. Let $k\geq N$ such that $A_k\cap (F\cup \alpha(F))=\emptyset$. We can then take $\alpha'\in (Sym(X))^n$ fixing $A_k$ and satisfying $\alpha'|_F=\alpha|_F$.

\bigskip

For (4), we show that the set
\bigskip

\noindent $\mathcal{U}_4=\{\alpha=(\alpha_1,\dots,\alpha_n)\in (Sym(X))^n| \forall m$, there exist infinitely many $\langle c\rangle$-orbits

$\,\,\,\,\,$ of size $m$ \}

\bigskip
\noindent is generic in $(Sym(X))^n$.

For all $m\geq 1$, let
$\mathcal{V}_m=\{\alpha\in (Sym(X))^n|$ there exists a finite subset $K\subset X$ such that every $\langle c \rangle$-orbit of size $m$ is contained in $K\}$ $=$ $\bigcup_{K \textrm{ finite}\subset X}$ $\mathcal{V}_{m,K}$,
where
$$\mathcal{V}_{m,K}= \{\alpha\in (Sym(X))^n| \textrm { if } |\langle c \rangle \cdot x|=m, \textrm{ then } \langle c \rangle \cdot x \subset K\}.$$

\noindent \textit{$\cdot$ $\mathcal{V}_{m,K}$ is of empty interior.} $\quad$ Let $F\subset X$ be a finite subset. Let $\alpha\in \mathcal{V}_{m,K}$. Take $x\notin (F\cup \alpha (F))\cup K$. Since $c$ contains $\alpha_i$ for some $i$, we can construct a cycle $c^m(\alpha')$ outside of $F\cup \alpha (F)\cup K$ such that $\alpha'|_F=\alpha |_F$ (Lemma \ref{cycleGen}), so that the orbit of $x$ under $\alpha'$ is of size $m$ and not contained in $K$.

\noindent \textit{$\cdot$ $\mathcal{V}_{m,K}$ is closed.} $\quad$ Let $\{\gamma_l\}_{l\geq 1}\subset \mathcal{V}_{m,K}$ converging to $\gamma \in (Sym(X))^n$. Let $x\in X$ such that $|\langle c(\gamma)\rangle \cdot x|=m$. Since $\gamma_l$ converges to $\gamma$, $c(\gamma_l)$ converges to $c(\gamma)$. Since $\langle c(\gamma)\rangle \cdot x$ is finite, there exists $l_0$ such that $\langle c(\gamma)\rangle \cdot x = \langle c(\gamma_l)\rangle \cdot x$, $\forall l\geq l_0$. Since $\gamma_l\in \mathcal{V}_{m,K}$ and $m=|\langle c(\gamma)\rangle \cdot x|=|\langle c(\gamma_l)\rangle \cdot x|$, we have $\langle c(\gamma_l)\rangle \cdot x\subset K$, $\forall l\geq l_0$. Therefore $\langle c(\gamma)\rangle \cdot x\subset K$, so that $\gamma\in \mathcal{V}_{m,K}$.

\bigskip

About (5), we prove that the set
$$
\mathcal{U}_5=\{\alpha=(\alpha_1,\dots,\alpha_n)\in (Sym(X))^n| \textrm{$\forall x\in X$, $\langle c\rangle \cdot x$ is finite }\}
$$
is generic in $(Sym(X))^n$.

For all $x\in X$, let $\mathcal{V}_x=\{\alpha\in (Sym(X))^n| \langle c\rangle \cdot x \textrm{ is infinite }\}$.
It is clear that the set $\mathcal{V}_x$ is closed. To see that the interior of $\mathcal{V}_{x}$ is empty, let $F\subset X$ be a finite subset and let $\alpha\in \mathcal{V}_{x}$. We shall show that there exists $\alpha'\notin \mathcal{V}_x$ such that $\alpha|_F=\alpha'|_F$. Denote $c=c(\alpha)$ and $c'=c(\alpha')$. We choose $p>> 1$ large enough so that
\begin{displaymath}
\left\{ \begin{array}{l}
 \big(B(c^{-p-1}x,|c|)\cup B(c^{p+1}x,|c|)\big)\cap (F\cup \alpha(F))=\emptyset;\\
 (F\cup \alpha(F))\subset B(x,|c^p|),
 \end{array} \right.
\end{displaymath}
where $|c|$ is the length of $c$ and $B(x,r)$ is the ball centered on $x$ with the radius $r$.

We construct a path of $c'$ outside of $B(x,|c^p|)$ starting from $c^{p+1}x$ which ends on $c^{-p-1}x$, i.e. $c'(c^{p+1}x)=c^{-p-1}x$. This is possible since $c'$ contains $\alpha_i$ for some $i$ (Lemma \ref{pathGen}). On the points in $B(x,|c^{p+1}|)$, we define
$$
\alpha'|_{B(x,|c^{p+1}|)}=\alpha|_{B(x,|c^{p+1}|)}.
$$
In particular, $\alpha'|_F=\alpha|_{F}$, and $|\langle c'\rangle\cdot x|$ is finite.

\bigskip

Finally for (6), let

\bigskip

\noindent $\mathcal{U}_6=\{\alpha=(\alpha_n,\dots,\alpha_1)\in(Sym(X))^n |$ for every finite index subgroup $H$ of

$\,\,\,\,\,\,\, \langle \alpha_1,\beta\rangle$, the $H$-action on $X$ is transitive $\}$.

\bigskip

By Proposition 4 in \cite{Moon}, the set $\mathcal{W}=\{\alpha_1 \in Sym(X)|$ for every finite index subgroup $H$ of $\langle \alpha_1,\beta\rangle$, the $H$-action on $X$ is transitive $\}$ is generic in $Sym(X)$. Thus $\mathcal{U}_6$ is generic in $(Sym(X))^n$ since $\mathcal{U}_6=\mathcal{W}\times (Sym(X))^{n-1}$.

\bigskip

Now let $\alpha=(\alpha_1,\dots,\alpha_n)\in \cap_{i=1}^6 \mathcal{U}_i$. It remains us to prove (3) and (6) in the Proposition. To simplify the notation, let $A_m:=A_{m_k}$ be the subsequence of $A_m$ fixed by $\alpha_i$, $\forall 1\leq i\leq n$ (genericity of $\mathcal{U}_3$).

Without loss of generality, let $c=w_1\beta^{b_1}w_2\beta^{b_2}\cdots w_l\beta^{b_l}$, where $w_j$ are reduced words on $\{\alpha^{\pm 1}_n$, $\dots$, $\alpha^{\pm 1}_1\}$, $\forall 1\leq j\leq l$. Recall that $\{A_{m}\}_{m\geq 1}$ is a sequence of pairwise disjoint intervals such that $|A_m|\geq m+ 2S^{+}_c(\beta)$. If $c$ does not contain $\beta$, then we can take the subinterval $A'_m$ of $A_m$ such that $|A'_m|=m$ for the F\o lner sequence which is fixed by $c$. If not, for all $m> S^{+}_c(\beta)$, let
$$
E_m=\beta^{b_1}(A_m) \cap \beta^{b_2+b_1}(A_m)\cap \cdots \cap \beta^{b_{l-1}+b_{l-2}+\cdots +b_1}(A_m)\cap \beta^{b_{l}+b_{l-1}+\cdots +b_1}(A_m).
$$

Notice that $\beta^{b_{l}+b_{l-1}+\cdots +b_1}(A_m)=A_m$. We claim that the set $E_m$ is not empty.
Indeed, for every $1 \leq i\leq l$, the set
$$\beta^{b_i+b_{i-1}+ \cdots + b_1}(A_m) \cap \beta^{b_p+b_{p-1}+\cdots +b_1}(A_m)$$
is not empty, $\forall 1\leq p\leq i-1$ since $|b_i+b_{i-1}+\cdots +b_{p+1}|\leq S^{+}_c(\beta)< |A_m|$. Moreover, a family of intervals which meet pairwise, has non-empty intersection so that $E_m\neq \emptyset$.


In addition, let us show that $c$ fixes the elements of $E_m$. Let $x\in E_m$ and let $1\leq p\leq l-1$. There exists $a_{l-p+1}\in A_m$ such that $x=\beta^{b_{l-p}+b_{l-p-1}+\cdots + b_1}(a_{l-p+1})$. Then
\begin{eqnarray}
\beta^{b_{l-p+1}+\cdots +b_{l-1}+b_l}(x)&=&\beta^{b_l+b_{l-1}+\cdots + b_{l-p+1}}(x)\nonumber \\
&=&\beta^{b_l+b_{l-1}+\cdots + b_{l-p+1}}\cdot\beta^{b_{l-p}+b_{l-p-1}+\dots +b_1}(a_{l-p+1})\nonumber \\
&=&a_{l-p+1} \in A_m.\nonumber
\end{eqnarray}

Since $w_j$ fixes every element in $A_m$, and the element $\beta^{b_{l-p+1}+\cdots +b_{l-1}+b_l}(x)$ is in $A_m$ for every $1\leq p\leq l-1$, the word $c$ fixes $x$, $\forall x\in E_m$. Clearly the set $E_m$ is a F\o lner sequence for $\langle \alpha_n$, $\alpha_{n-1}$, $\dots$, $\alpha_1$, $\beta\rangle$.

Furthermore, we have
$$
A_m \cap \beta^{S^{+}_c(\beta)}A_m \cap \beta^{S^{-}_c(\beta)}A_m \subseteq E_m,
$$
and
$$
|A_m \cap \beta^{S^{+}_c(\beta)}A_m \cap \beta^{S^{-}_c(\beta)}A_m |=|A_m|-2S^{+}_c(\beta)\geq m.
$$

So $|E_m|\geq m$, and upon replacing $E_{m}$ by a subinterval $E'_m$ of $E_m$ such that $|E'_{m}|=m$, we can suppose that $|E_{m}|=m$, $\forall m\geq 1$. Thus the sequence $\{E_m\}_{m\geq 1}$ is a F\o lner sequence satisfying the condition in (3) in the Proposition \ref{Fnew}.

Furthermore, if $H$ is a finite index subgroup of $\langle \alpha_n,\dots,\alpha_1, \beta\rangle$, then $Q=H\cap \langle\alpha_1,\beta\rangle$ is a finite index subgroup of $\langle \alpha_1,\beta\rangle$, so by the genericity of $\mathcal{U}_6$ the $Q$-action is transitive and therefore the $H$-action on $X$ is transitive.

\end{proof}

\section{Construction of $\mathbb{F}_{n+1}\ast_{\mathbb{Z}} \mathbb{F}_{m+1}$-actions, $n,m\geq 1$}

Let $X$ be an infinite countable set.
Let $G=\langle \alpha_n$, $\alpha_{n-1}$, $\dots$, $\alpha_1$, $\beta \rangle$ $\curvearrowright X$ be the group action constructed as in Proposition \ref{Fnew} with the pairwise disjoint F\o lner sequence $\{A_k\}_{k\geq 1}$. For $m\geq 1$, let $d$ be a cyclically reduced word on $\{\alpha_m$, $\alpha_{m-1}$, $\dots$, $\alpha_1$, $\beta \}$ such that $S_d(\beta)=0$ and $d$ contains $\alpha_j$ for some $j$. Let $H=\langle \alpha_m$, $\alpha_{m-1}$, $\dots$, $\alpha_1$, $\beta \rangle$ $\curvearrowright X$ be the group action constructed as in Proposition \ref{Fnew} with the pairwise disjoint F\o lner sequence $\{B_k\}_{k\geq 1}$. Let $Z=\{\sigma \in Sym(X)| \sigma c = d\sigma\}$. By virtue of the points (4) and (5) of Proposition \ref{Fnew}, the set $Z$ is not empty. Let
$$ H^{\sigma}=\sigma^{-1}H\sigma= \langle \sigma^{-1}\alpha_m\sigma, \sigma^{-1}\alpha_{m-1}\sigma, \dots, \sigma^{-1}\alpha_1\sigma, \sigma^{-1}\beta\sigma \rangle.$$

For $\sigma\in Z$, consider the amalgamated free product $G\ast_{\langle c=d\rangle}H^{\sigma}$ of $G$ and $H^{\sigma}$ along $\langle c=d \rangle$. The action of $G\ast_{\langle c=d\rangle}H^{\sigma}$ on $X$ is given by $g\cdot x=gx$, and $h\cdot x=\sigma^{-1}h\sigma x$, $\forall g\in G$ and $\forall h\in H$.

Notice that the set $Z$ is closed in $Sym(X)$. In particular, $Z$ is a Baire space.

\begin{prop}\label{fidel}
The set
$$
\mathcal{O}_1=\{\sigma\in Z \mid \textrm{ the action of $G\ast_{\langle c=d \rangle}H^{\sigma}$ on $X$ is faithful } \}
$$
is generic in $Z$.
\end{prop}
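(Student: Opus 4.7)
The plan is to apply the Baire Category Theorem in the Baire space $Z$: for each non-trivial element $w$ of the amalgamated free product $G\ast_{\langle c=d\rangle}H$, set
$$
\mathcal{W}_w = \{\sigma\in Z \mid \exists\, x\in X,\ w\cdot_\sigma x \neq x\},
$$
where $w\cdot_\sigma$ denotes the action via $g\mapsto g$ for $g\in G$ and $h\mapsto \sigma^{-1}h\sigma$ for $h\in H$. Then $\mathcal{O}_1=\bigcap_{w\neq 1}\mathcal{W}_w$, and each $\mathcal{W}_w$ is clearly open in $Z$ because the condition depends only on finitely many values of $\sigma$ and $\sigma^{-1}$. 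When $w$ lies entirely in $G$ or in $H$ (the length-one case in the normal form for the amalgam), the faithfulness established in Proposition \ref{Fnew}(1) immediately gives $\mathcal{W}_w=Z$; so the real content is the density of $\mathcal{W}_w$ when $w=s_k s_{k-1}\cdots s_1$ is in alternating normal form of length $k\geq 2$, with factors alternating in $G\setminus\langle c\rangle$ and $H\setminus\langle d\rangle$.

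For density in that case, I would fix $\sigma\in Z$ together with a finite subset $F\subset X$ and build $\sigma'\in Z$ with $\sigma'|_F=\sigma|_F$ satisfying $w\cdot_{\sigma'} x_0 \neq x_0$ for some $x_0$. The key observation is that any element of $Z$ is determined by a size-preserving matching between the $\langle c\rangle$-orbits and $\langle d\rangle$-orbits of $X$ together with an equivariant bijection on each matched pair, and by properties (4)--(5) of Proposition \ref{Fnew} both sides carry infinitely many finite orbits of each size, giving great freedom to rewire $\sigma$ on orbits disjoint from $F$. Applying property (2) of Proposition \ref{Fnew} successively to each factor $s_i$, I would choose a starting point $x_0$ far from $F$ that is fixed by $c$ with $s_1 x_0 \neq x_0$ again fixed by $c$ (when $s_1\in G$), and then build a zigzag trajectory $x_j = s_j\cdot_{\sigma'} x_{j-1}$ by placing each intermediate $x_j$ in a fresh $\langle c\rangle$- or $\langle d\rangle$-orbit; $\sigma$ is then redefined on these fresh orbits so as to remain compatible with the intertwining relation $\sigma' c = d\sigma'$ while forcing $x_k\neq x_0$.

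The main obstacle is the combinatorial bookkeeping of this zigzag: every modification of $\sigma$ must simultaneously respect the $c$--$d$ intertwining condition defining $Z$ and prevent the trajectory from accidentally closing up at an interior letter (which would spuriously give $x_k=x_0$). The alternation $s_i\notin\langle c\rangle$, $s_i\notin\langle d\rangle$ is exactly what keeps each successive $s_i x_{j-1}$ in a new $\langle c\rangle$- or $\langle d\rangle$-orbit, so that property (2) of Proposition \ref{Fnew} supplies sufficiently many ``movement'' candidates to carry out the construction inductively along the length of $w$ while leaving $\sigma$ unchanged on $F$; intersecting the resulting countable family of open dense sets $\mathcal{W}_w$ then yields the generic $G_\delta$ subset $\mathcal{O}_1$ of $Z$.
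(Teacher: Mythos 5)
Your proposal is correct and follows essentially the same route as the paper's proof: decompose $\mathcal{O}_1$ as a countable intersection over non-trivial $w$ in normal form, dispose of the length-one case by faithfulness of the factor actions, and prove density by a zigzag construction that uses property (2) of Proposition \ref{Fnew} to pick fresh points in $\mathrm{Fix}(c)$ and $\mathrm{Fix}(d)$ and rewires $\sigma$ on them (via the swap $\sigma'(x):=y$, $\sigma'(\sigma^{-1}(y)):=\sigma(x)$) while preserving the intertwining relation $\sigma c=d\sigma$. The only cosmetic difference is that you work with the open sets $\mathcal{W}_w$ rather than their closed complements $\mathcal{V}_w$, and you shortcut the case $w\in H$ by noting $\sigma^{-1}h\sigma$ is conjugate to $h$, which the paper instead handles by an explicit construction.
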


\begin{proof}

For every non trivial word $w\in G\ast_{\langle c=d\rangle}H^{\sigma}$, let us show that the set
$$\mathcal{V}_w=\{\sigma\in Z | \forall x\in X, w^{\sigma}x=x\}$$
 is closed and of empty interior. It is obvious that the set $\mathcal{V}_w$ is closed. To prove that the set $\mathcal{V}_w$ is of empty interior, let us treat the case where $w=ag_n h_n\cdots g_1 h_1$ with $a\in\langle c \rangle$, $g_i\in G \setminus \langle c \rangle$, and $h_i\in H \setminus \langle d \rangle$, $n\geq 1$. The corresponding element of $Sym(X)$ given by the action is $w^{\sigma}=ag_n\sigma^{-1}h_n\sigma \cdots g_1\sigma^{-1}h_1\sigma$. Let $\sigma\in \mathcal{V}_w$. Let $F\subset X$ be a finite subset. We shall show that there exists $\sigma'\in Z \setminus \mathcal{V}_w$ such that $\sigma'|_F=\sigma |_F$. For all $g\in G \setminus \langle c \rangle $ and $h\in H\setminus \langle d \rangle$, let
$$
\widehat{g}=\{x\in X \mid cx=x, \, cgx=gx \textrm{ and $gx\neq x$ }\},
$$
$$
\widehat{h}=\{x\in X \mid dx=x, \, dhx=hx \textrm{ and $hx\neq x$ }\}.
$$
By (2) of Proposition \ref{Fnew}, these sets are infinite.

Choose any $x_0 \in \textrm{Fix}(c)\setminus (F\cup \sigma(F))$. By induction on $1\leq i\leq n$, we choose $x_{4i-3}\in \widehat{h_i}$ such that $x_{4i-3}$, $h_ix_{4i-3} \notin (F\cup \sigma(F))$ are new points. This is possible since $\widehat{h_i}$ is infinite. Then we define
$$\sigma' (x_{4i-4}):=x_{4i-3} \textrm{ and } \sigma' (\sigma^{-1} (x_{4i-3})):=\sigma (x_{4i-4}).$$
 We set $x_{4i-2}:= h_i x_{4i-3}$, which is different from $x_{4i-3}$ and which is fixed by $d$, by definition of $\widehat{h_i}$. We choose $x_{4i-1}\in \widehat{g_i}$ such that $x_{4i-1}$, $g_ix_{4i-1}\notin (F\cup\sigma(F))$ are again new points. This is again possible since $\widehat{g_i}$ is infinite. Then we define
$$\sigma' (x_{4i-1}):=x_{4i-2}\textrm{ and }\sigma'(\sigma^{-1} (x_{4i-2})):=\sigma (x_{4i-1}).$$
 We finally set $x_{4i}:= g_i x_{4i-1}$.
 Then every point $x$ on which $\sigma'$ is defined verifies $\sigma'c(x)=d\sigma'(x)$. Indeed,
\begin{enumerate}
\item[$\cdot$] $\sigma'c(x_{4i-4})=\sigma'(x_{4i-4})=x_{4i-3}=d(x_{4i-3})=d\sigma'(x_{4i-4})$ since $x_{4i-4}\in \textrm{Fix}(c)$ and $x_{4i-3}\in \mathrm{Fix}(d)$;
\item[$\cdot$] $\sigma'c(\sigma^{-1}(x_{4i-3}))$ $=\sigma'(\sigma^{-1}(x_{4i-3}))$ $=\sigma(x_{4i-4})$ $=d\sigma (x_{4i-4})$ $=d\sigma'(\sigma^{-1}(x_{4i-3}))$ since $\sigma^{-1}(x_{4i-3})\in \Fix(c)$ and $\sigma(x_{4i-4})\in \Fix(d)$ because $\sigma\in Z$;
\item[$\cdot$] $\sigma'c(x_{4i-1})=\sigma'(x_{4i-1})=x_{4i-2}=d(x_{4i-2})=d\sigma'(x_{4i-1})$ since $x_{4i-2}\in \textrm{Fix}(d)$ and $x_{4i-1}\in \mathrm{Fix}(c)$;
\item[$\cdot$] $\sigma'c(\sigma^{-1}(x_{4i-2}))$ $=\sigma'(\sigma^{-1}(x_{4i-2}))$ $=\sigma(x_{4i-1})$ $=d\sigma(x_{4i-1})$ $=d\sigma'(\sigma^{-1}(x_{4i-2}))$ since $\sigma^{-1}(x_{4i-2})\in \Fix(c)$ and $\sigma(x_{4i-1})\in \Fix(d)$ because $\sigma\in Z$.
\end{enumerate}

By construction, the $4n$ points defined by the subwords on the right of $w^{\sigma'}$ are all distinct. In particular, $w^{\sigma'}x_0=x_{4n}\neq x_0$. If $w=h\in H\setminus \{\mathrm{Id}\}$, choose $x_0\in \Fix(c)\setminus (F\cup \sigma(F))$, $x_1\in \hat{h}\setminus (F\cup \sigma(F)\cup \{x_0\})$, $x_2\in \Fix(c)\setminus (F\cup \sigma(F)\cup \{x_0, x_1\})$ and define $\sigma'(x_0)=x_1$, $\sigma'(x_2)=hx_1$, $\sigma'(\sigma^{-1}(x_1))=\sigma(x_0)$, $\sigma'(\sigma^{-1}(hx_1))=\sigma(x_2)$ so that $w^{\sigma'}x_0=x_2\neq x_0$.
At last, if $w=g\in G\setminus \{\mathrm{Id}\}$, then there exists $x\in X$ such that $gx\neq x$ since $G$ acts faithfully on $X$. For all other points, we define $\sigma'$ to be equal to $\sigma$. Therefore, $\sigma'$ constructed in this way is in $Z\setminus \mathcal{V}_w$ and $\sigma' |_F=\sigma |_F$.
\end{proof}

\begin{prop}\label{moy}
The set
$$
\mathcal{O}_2=\{\sigma \in Z | \textrm{ $\exists$ $\{k_l\}_{l\geq 1}$ a subsequence of $k$ such that $\sigma(A_{k_l})=B_{k_l}$, $\forall l\geq 1$ }\}
$$
is generic in $Z$.
\end{prop}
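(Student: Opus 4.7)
The plan is to express $\mathcal{O}_2$ as a countable intersection of dense open subsets of the Baire space $Z$. Set $\mathcal{U}_N=\{\sigma\in Z \mid \exists\, k\geq N \textrm{ with } \sigma(A_k)=B_k\}$, so that $\mathcal{O}_2=\bigcap_{N\geq 1}\mathcal{U}_N$. Each $\mathcal{U}_N$ is open: if $\sigma\in \mathcal{U}_N$, witnessed by some $k\geq N$, any $\sigma'\in Z$ coinciding with $\sigma$ on the finite set $A_k$ still satisfies $\sigma'(A_k)=B_k$, so the basic neighborhood $\{\sigma'\in Z \mid \sigma'|_{A_k}=\sigma|_{A_k}\}$ lies in $\mathcal{U}_N$.

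For density, fix $N\geq 1$, $\sigma\in Z$ and a finite $F\subset X$; the task is to build $\sigma'\in \mathcal{U}_N$ with $\sigma'|_F=\sigma|_F$. Because $\sigma c=d\sigma$, the values of any element of $Z$ agreeing with $\sigma$ on $F$ are forced to coincide with $\sigma$ on the whole set
$$F^* = \bigcup_{x\in F}\langle c\rangle \cdot x,$$
which is finite by point (5) of Proposition \ref{Fnew}. Since the intervals $\{A_k\}_{k\geq 1}$ are pairwise disjoint, we may choose $k\geq N$ large enough that $A_k\cap F^*=\emptyset$ and $B_k\cap \sigma(F^*)=\emptyset$. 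Set $\sigma'=\sigma$ on $F^*$, and let $\sigma'|_{A_k}$ be any bijection $A_k\to B_k$; by point (3) of Proposition \ref{Fnew}, $A_k\subset\Fix(c)$ and $B_k\subset\Fix(d)$, so the relation $\sigma' c=d\sigma'$ holds automatically on $A_k$ (both sides reduce to $\sigma'|_{A_k}$).

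It remains to extend $\sigma'$ to a permutation of all of $X$ satisfying $\sigma' c=d\sigma'$ everywhere. Such an extension is equivalent to matching each remaining $c$-orbit bijectively and equivariantly to a $d$-orbit of the same size: given a match $O\leftrightarrow O'$, pick anchors $x\in O$ and $y\in O'$, set $\sigma'(x)=y$, and propagate by $\sigma'(c^jx)=d^jy$. By points (4) and (5) of Proposition \ref{Fnew} applied to both $c$ and $d$, all orbits are finite and, for every size $s\geq 1$, infinitely many $c$-orbits and infinitely many $d$-orbits of size $s$ remain after removing the finitely many orbits already contained in $F^*\cup A_k$; hence the matching can be performed size by size, producing the desired $\sigma'\in \mathcal{U}_N$. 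The only real technical obstacle is bookkeeping: ensuring that the values imposed on $F^*$ (inherited from $\sigma$), those chosen on $A_k$, and those on the remaining orbits are mutually compatible with the global relation $\sigma' c=d\sigma'$. This is guaranteed by the disjointness $A_k\cap F^*=\emptyset$, by the pointwise fixation of $A_k$ and $B_k$ under $c$ and $d$ respectively, and by the abundance of orbits of each finite size supplied by Proposition \ref{Fnew}.
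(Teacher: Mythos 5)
Your proposal is correct, and the skeleton (writing $\mathcal{O}_2=\bigcap_N \mathcal{U}_N$ and proving each $\mathcal{U}_N$ open and dense) matches the paper's, but your density argument is genuinely different. The paper makes a \emph{minimal local} modification: having chosen $k\geq N$ with $A_k$ and $B_k$ disjoint from $F\cup\sigma(F)$, it sets $\sigma'(a_i)=b_i$ and $\sigma'(\sigma^{-1}(b_i))=\sigma(a_i)$ and leaves $\sigma$ untouched everywhere else; the relation $\sigma'c=d\sigma'$ survives because the four families of points involved ($a_i$ and $\sigma^{-1}(b_i)$ in the source, $b_i$ and $\sigma(a_i)$ in the target) are all fixed by $c$, respectively $d$ (using only point (3) of Proposition \ref{Fnew} and $\sigma\in Z$). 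You instead perform a \emph{global reconstruction}: freeze $\sigma$ on the $\langle c\rangle$-saturation $F^*$ of $F$, send $A_k$ to $B_k$, and re-match every remaining $c$-orbit to a $d$-orbit of equal size, which additionally invokes points (4) and (5); this is essentially the argument showing $Z\neq\emptyset$, relativized to a finite set. Both are valid. The paper's swap is shorter and uses fewer hypotheses; your version has the merit of making explicit a point the paper leaves implicit, namely that prescribing $\sigma'$ on $F$ forces it on all of $F^*$ (so that any honest perturbation must respect whole $c$-orbits), and your cardinality bookkeeping (countably many orbits of each size on both sides, minus finitely many) is exactly what is needed for the orbit matching to close up into a bijection of $X$.
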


\begin{proof}

Let us write $\mathcal{O}_2=\bigcap_{N\in \mathbb{N}}\{\sigma\in Z| \textrm{ there exists $n\geq N$ such that } \sigma(A_n)=B_n\}$. We need to show that for all $N
\in \mathbb{N}$, the set $\mathcal{V}_N=\{\sigma\in Z | \forall n\geq N, \sigma(A_n)\neq B_n\}$ is closed and of empty interior.

\noindent \textit{$\cdot$ $\mathcal{V}_N$ is of empty interior.} $\quad$ Let $\sigma\in \mathcal{V}_N$. Let $F\subset X$ be a finite subset. Let $n\geq N$ large enough so that $A_n\cap (F\cup\sigma(F))=\emptyset$ and $B_n\cap (F\cup\sigma(F))=\emptyset$. This is possible since the sets $\{A_n\}$ (respectively the sets $\{B_n\}$) are pairwise disjoint. Let $A_n=\{a_1$, $\dots$, $a_n\}$ and $B_n=\{b_1$, $\dots$, $b_n\}$. We define $\sigma'(a_i)=b_i$ and $\sigma'(\sigma^{-1}(b_i))=\sigma(a_i)$, $\forall i$, which is well defined because $a_i\in \textrm{Fix}(c)$ and $b_i\in \textrm{Fix}(d)$. For all other points, we define $\sigma'$ to be equal to $\sigma$. Therefore, $\sigma'\in Z\setminus \mathcal{V}_N$ and $\sigma'|_F=\sigma|_F$.

\noindent \textit{$\cdot$ $\mathcal{V}_N$ is closed.} $\quad$ We have $\mathcal{V}_N=\bigcap_{n\geq N}\mathcal{W}_{n}$, where $\mathcal{W}_{n}=\{\sigma\in Z|\sigma(A_n)\neq B_n\}$. So the set $\mathcal{V}_N$ is closed being the intersection of closed sets.
\end{proof}

Let $\sigma \in \mathcal{O}_1\cap \mathcal{O}_2$. We claim that $\{A_{k_l}\}_{l\geq 1}$ is a F\o lner sequence for $G\ast_{\langle c=d \rangle} H^{\sigma}$. Indeed, $\{A_{k_l}\}$ is F\o lner for $G$, and for all $h\in H$, we have

\begin{eqnarray}
\lim_{l\rightarrow \infty}\frac{|A_{k_l} \vartriangle h\cdot A_{k_l}|}{|A_{k_l}|}&=&\lim_{l\rightarrow \infty}\frac{|A_{k_l} \vartriangle \sigma^{-1}h\sigma A_{k_l}|}{|A_{k_l}|}= \lim_{l\rightarrow \infty}\frac{|\sigma A_{k_l} \vartriangle h\sigma A_{k_l}|}{|A_{k_l}|}\nonumber \\
&=& \lim_{l\rightarrow \infty}\frac{|B_{k_l} \vartriangle hB_{k_l}|}{|B_{k_l}|}=0,\nonumber
\end{eqnarray}
since $\{B_{k_l}\}$ is F\o lner for $H$, $\sigma(A_{k_l})=B_{k_l}$ and $|A_{k_l}|=|B_{k_l}|$, for all $l\geq 1$.

Furthermore, if $H$ is a finite index subgroup of $\mathbb{F}_{n+1} \ast_{\langle c=d \rangle} \mathbb{F}_{m+1}$, since every finite index subgroup of $\mathbb{F}_{n+1}$ acts transitively on $X$, \textit{a fortiori} the $H$-action on $X$ is transitive.

Therefore, we have:

\begin{thm}\label{amalgam}
\begin{enumerate}
\item There exists a transitive, faithful and amenable action of the group $\mathbb{F}_{n+1} \ast_{\langle c=d \rangle} \mathbb{F}_{m+1}$ on $X$, where $c\in \mathbb{F}_{n+1}$ (respectively $d\in \mathbb{F}_{m+1}$) is a cyclically reduced non-primitive word such that the exponent sum of some generator occurring in $c$ (respectively $d$) is zero.
\item Every finite index subgroup of such a group admits transitive, faithful and amenable action on $X$.
\end{enumerate}
\end{thm}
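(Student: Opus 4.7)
The plan is to assemble the ingredients already in place. First, apply Proposition \ref{Fnew} twice, once to $c\in\F_{n+1}$ and once to $d\in\F_{m+1}$, in each case renaming as $\beta$ the distinguished generator (the one whose exponent sum in $c$, resp.\ $d$, is zero); this $\beta$ is a single simply transitive permutation of $X$ shared between the two constructions. The outputs are actions $G=\langle\alpha_1,\dots,\alpha_n,\beta\rangle\curvearrowright X$ and $H=\langle\alpha_1,\dots,\alpha_m,\beta\rangle\curvearrowright X$, each faithful and transitive, with pairwise disjoint F\o lner sequences $\{A_k\}$ and $\{B_k\}$ pointwise fixed by $c$ and $d$ respectively, with both $c$ and $d$ of permutation type $(\infty,\infty,\dots;0)$ on $X$, and with condition~(6) of Proposition~\ref{Fnew} available for later use.

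Next, use the cycle-type information to build the amalgam. Since $c$ and $d$ act on $X$ with the same permutation type (infinitely many orbits of each finite size and no infinite orbit), there exists $\sigma\in Sym(X)$ with $\sigma c=d\sigma$, so $Z$ is nonempty; being closed in $Sym(X)$, it is a Baire space. By Propositions~\ref{fidel} and~\ref{moy}, the set $\mathcal{O}_1\cap\mathcal{O}_2$ is generic in $Z$, hence nonempty; fix $\sigma\in\mathcal{O}_1\cap\mathcal{O}_2$. The resulting action of $\F_{n+1}\ast_{\langle c=d\rangle}\F_{m+1}$ on $X$ is then transitive (since $G$ already is), faithful (by $\sigma\in\mathcal{O}_1$), and amenable with F\o lner sequence $\{A_{k_l}\}$ by the displayed computation preceding the theorem. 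This establishes~(1).

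For part~(2), let $K$ be a finite-index subgroup of $\F_{n+1}\ast_{\langle c=d\rangle}\F_{m+1}$. Then $K\cap G$ has finite index in $G=\F_{n+1}$, so acts transitively on $X$ by Proposition~\ref{Fnew}(6); a fortiori so does $K$. Faithfulness descends automatically from the ambient faithful action, and any F\o lner sequence for the full group remains F\o lner for the subgroup acting on the same space, so amenability also descends. The only step where anything substantive needs to be checked is the nonemptiness of $Z$, which is exactly what the cycle-type statements~(4) and~(5) of Proposition~\ref{Fnew} were engineered to guarantee; the remainder is pure bookkeeping across the genericity statements already established, so I expect no new obstacle at this final stage.
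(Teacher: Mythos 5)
Your proposal is correct and follows essentially the same route as the paper: invoke Proposition \ref{Fnew} for $c$ and $d$ separately, use conditions (4) and (5) to see that $c$ and $d$ are conjugate in $Sym(X)$ so that $Z\neq\emptyset$, pick $\sigma$ in the generic set $\mathcal{O}_1\cap\mathcal{O}_2$ of Propositions \ref{fidel} and \ref{moy}, and transport the F\o lner sequence via $\sigma(A_{k_l})=B_{k_l}$; part (2) is handled exactly as in the paper via condition (6) applied to $K\cap\mathbb{F}_{n+1}$, with faithfulness and amenability passing automatically to subgroups. No gaps.
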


The complete proof of Theorem \ref{CycPinch} is achieved from the following Lemma:

\begin{lem}
If $c$ is a reduced word in $\mathbb{F}_n$, then there exists an automorphism $\phi$ of $\mathbb{F}_n$ such that the exponent sum of some generator occurring in $\phi(c)$ is zero.
\end{lem}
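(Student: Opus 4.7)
The plan is to reduce the problem to a statement about the abelianization $\mathbb{F}_n^{\mathrm{ab}} \cong \mathbb{Z}^n$, using two classical facts: the natural homomorphism $\mathrm{Aut}(\mathbb{F}_n) \to \mathrm{GL}_n(\mathbb{Z})$ is surjective (Nielsen moves lift the elementary matrices generating $\mathrm{GL}_n(\mathbb{Z})$), and any primitive vector in $\mathbb{Z}^n$ lies in the $\mathrm{GL}_n(\mathbb{Z})$-orbit of $(1, 0, \dots, 0)$. Write $\mathbb{F}_n = \langle a_1, \dots, a_n\rangle$ and let $\bar{c} = (e_1, \dots, e_n) \in \mathbb{Z}^n$ be the image of $c$ in the abelianization, where $e_i$ is the exponent sum of $a_i$ in $c$. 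We may assume $n \geq 2$ and $c \neq 1$, since otherwise the statement is either degenerate or immediate.

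First I would dispose of the easy case. If $\bar{c} = 0$, i.e. $c \in [\mathbb{F}_n, \mathbb{F}_n]$, then every generator appearing in the non-trivial reduced word $c$ already has exponent sum zero, so $\phi = \mathrm{id}$ works.

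Otherwise, set $d = \gcd(e_1, \dots, e_n) \geq 1$, so that $\bar{c}/d$ is primitive. Choose $A \in \mathrm{GL}_n(\mathbb{Z})$ sending $(e_1, \dots, e_n)^T$ to $(d, 0, \dots, 0)^T$, and lift $A$ via the surjection above to $\phi_0 \in \mathrm{Aut}(\mathbb{F}_n)$. By construction the exponent sum of $a_i$ in $\phi_0(c)$ vanishes for every $i \geq 2$. If any of $a_2, \dots, a_n$ actually occurs in the reduced form of $\phi_0(c)$, take $\phi = \phi_0$. If not, then the reduced word $\phi_0(c)$ involves only $a_1^{\pm 1}$, and must in fact equal $a_1^d$ (since its abelianization is $(d, 0, \dots, 0)$). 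In this remaining case, I would set $\phi = \iota_{a_2} \circ \phi_0$, where $\iota_{a_2}$ denotes conjugation by $a_2$. Then
\[
\phi(c) = a_2 \, a_1^d \, a_2^{-1},
\]
which is a reduced word (since $d \neq 0$ and $a_1 \neq a_2^{\pm 1}$) in which $a_2$ appears with exponent sum $1 + (-1) = 0$.

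The conceptual core is just applying Smith-normal-form reasoning at the abelian level and lifting. The one genuine pitfall, and the main (very mild) obstacle, is that having exponent sum zero does not by itself guarantee that the generator actually occurs in the reduced word $\phi_0(c)$; this is exactly why the inner-automorphism twist is needed in the degenerate situation $\phi_0(c) = a_1^d$. Everything else is a direct appeal to the two standard facts quoted at the start.
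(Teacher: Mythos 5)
Your proof is correct and rests on the same two pillars as the paper's: pass to the abelianization, find a matrix in $\mathrm{GL}_n(\mathbb{Z})$ killing a coordinate of the exponent-sum vector, and lift it through the surjection $\mathrm{Aut}(\mathbb{F}_n)\twoheadrightarrow \mathrm{GL}_n(\mathbb{Z})$. The differences are minor but worth noting. The paper works with just two coordinates, building an explicit $2\times 2$ block $\left(\begin{smallmatrix} m_1 & -m_2 \\ b & a\end{smallmatrix}\right)$ from $\mathrm{lcm}(S_c(t_1),S_c(t_2))$ and B\'ezout, which zeroes a single exponent sum; you invoke Smith normal form to send $\bar c$ to $(d,0,\dots,0)$, zeroing $n-1$ of them at once. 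More substantively, you address a point the paper's proof silently skips: the lemma asks for a generator \emph{occurring} in $\phi(c)$ with zero exponent sum, and zero exponent sum alone does not force occurrence. Your conjugation twist $\phi=\iota_{a_2}\circ\phi_0$ in the degenerate case $\phi_0(c)=a_1^d$ closes that gap cleanly (and your observation that a reduced word in $a_1^{\pm1}$ alone with abelianization $(d,0,\dots,0)$ must be $a_1^d$ is what makes the case analysis exhaustive). One caveat applies equally to both arguments: the statement genuinely requires $n\geq 2$ (for $n=1$ and $c=a_1^k$, $k\neq 0$, it is false), which is harmless here since the theorem only uses free factors of rank at least $2$, but calling the $n=1$ case merely ``degenerate'' understates that it actually fails.
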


\begin{proof}
Since there is an epimorphism $\pi : \textrm{Aut}(\mathbb{F}_n) \twoheadrightarrow \textrm{Aut}(\mathbb{Z}^n)\simeq GL_n(\mathbb{Z})$, it is enough to find a matrix $M\in GL_n(\mathbb{Z})$ such that the exponent sum $S_{\phi(c)}(t)$ of exponents of some generator $t$ in the word $\phi(c)$ is zero, where $\phi\in $ Aut$(\mathbb{F}_n)$ is such that $\pi(\phi)=M\in GL_n(\mathbb{Z})$. Denote by $t_1$, $\dots$, $t_n$ the generators of $\mathbb{F}_n$ such that $S_c(t_i)\neq 0$, $\forall 1\leq i\leq n$.
Let $m:=lcm(S_c(t_1), S_c(t_2))$ be the least common multiple of $S_c(t_1)$ and $S_c(t_2)$. Then there exist $m_1$ and $m_2$ such that $m=m_1S_c(t_1)$ and $m=m_2S_c(t_2)$ so that $m_1S_c(t_1)-m_2S_c(t_2)=0$. Moreover, the greatest common divisor $gcd(m_1,m_2)$ of $m_1$ and $m_2$ is 1, so by Bézout's identity, there exist $a$ and $b$ such that $m_1a+m_2b=1$. So by letting $s:=b S_c(t_1)+ aS_c(t_2)$, the matrix
$$
\left( \begin{array}{cc} \begin{array}{cc} m_1 & -m_2 \\ b & a \end{array} & \textrm{{\Large 0}} \\ \textrm{{\Large 0}} & \begin{array}{ccc} 1& & \\ & \ddots & \\ & & 1 \end{array} \end{array}\right)
$$
is in $GL_n(\mathbb{Z})$ and it sends $(S_c(t_1)$, $S_c(t_2)$, $\dots$, $S_c(t_n))^t$ to $(0$, $s$, $\dots$, $S_c(t_n))^t$.
\end{proof}


\bibliographystyle{amsplain}
\bibliography{amalgambib}

\smallskip

\begin{quote}
Soyoung \textsc{Moon} \\
Institut de Math\'ematiques \\
Universit\'e de Bourgogne\\
UMR 5584 du CNRS\\
9 avenue Alain Savary - BP 47870\\
21078 Dijon cedex\\
France

\smallskip

E-mail: \url{soyoung.moon@u-bourgogne.fr}
\end{quote}
\end{document}